\newtheorem{thm}{Theorem}
\title{Curvature-Adaptive Perturbation and Subspace Descent for Robust Saddle Point Escape in High-Dimensional Optimization}
\date{}
\author[1,2]{Ronald Katende\footnote{rkatende92@gmail.com}}
\author[2]{Henry Kasumba \footnote{kasumba123@gmail.com}}
\affil[1]{Department of Mathematics, Kabale University, Kikungiri Hill, P.O. Box 317, Kabale, Uganda}
\affil[2]{Department of Mathematics, Makerere University, P.O. BOx 7062, Kampala, Uganda}
\begin{document}

%\title{Efficient Saddle Point Escape in High Dimensions via Adaptive Perturbation and Subspace Descent}

\maketitle

\begin{abstract}
\noindent High-dimensional non-convex optimization problems in engineering design, control, and learning are often hindered by saddle points, flat plateaus, and strongly anisotropic curvature. This paper develops a unified, curvature-adaptive framework that combines stochastic perturbations, adaptive learning rates, and randomized subspace descent to enhance escape efficiency and scalability. We show theoretically that gradient flow almost surely avoids strict saddles, with escape probability increasing exponentially in dimension. For noise-perturbed gradient descent, we derive explicit escape-time bounds that depend on local curvature and noise magnitude. Adaptive step sizes further reduce escape times by responding to local gradient variability, while randomized subspace descent preserves descent directions in low-dimensional projections and ensures global convergence with logarithmic dependence on dimension. Numerical experiments on nonlinear and constrained benchmarks validate these results, demonstrating faster escape, improved robustness to ill-conditioning, and lower total runtime compared to standard first- and second-order methods. The proposed approach offers practical tools for large-scale engineering optimization tasks where curvature, noise, and dimensionality interplay critically.\\
\vspace{0.2em}

\noindent {\bf{Keywords:}} Non-Convex Optimization; Saddle Point Escape; High-Dimensional Optimization; Stochastic Perturbations; Curvature-Adaptive Learning Rates; Randomized Subspace Descent; Escape-Time Analysis\\
\vspace{0.2em}

\noindent{\bf{MSC}}[2020]:90C26; 90C30; 68T07; 65K05; 90C15
\end{abstract}

\section{Introduction}

High-dimensional nonconvex optimization arises widely in machine learning, control, and signal processing \cite{opt1,opt2,opt5,opt6}. Such problems often feature numerous saddle points and flat regions, which slow or stall first-order methods \cite{opt7,opt9,opt10}. The influence of saddles becomes more pronounced as dimension increases \cite{opt11,opt12}, motivating algorithms that blend computational efficiency with provable robustness \cite{opt5,opt8,opt13}.

While stochastic methods like SGD escape saddles via implicit noise \cite{opt10,opt15,opt16}, their efficiency depends on carefully chosen step sizes and noise levels, especially in low-curvature zones \cite{opt11,opt17,opt18}. Second-order approaches detect curvature directly but incur prohibitive cost in large-scale problems \cite{opt5,opt7,opt13}. Hybrid strategies combining curvature adaptation, noise, and randomized search have emerged \cite{berahas2022balancing,opt14}, yet the interplay between dimension, curvature, and convergence remains only partly understood.

\begin{table}[h!]
\centering
\renewcommand{\arraystretch}{1.2}
\begin{tabular}{ll}
\hline
\textbf{Symbol} & \textbf{Description} \\
\hline
\( f: \mathbb{R}^n \to \mathbb{R} \) & Nonconvex smooth objective \\
\( x_k \) & Iterate at step \( k \) \\
\( \nabla f(x) \), \( H(x) \) & Gradient and Hessian \\
\( \lambda_{\min}(H(x)) \) & Minimum eigenvalue of Hessian \\
\( x^* \) & Critical point (\( \nabla f(x^*)=0 \)) \\
\( \gamma \) & Negative curvature magnitude \\
\( \eta_k \) & Learning rate at step \( k \) \\
\( v_k \) & Gradient norm estimate \\
\( \zeta_k \sim \mathcal{N}(0,\sigma^2 I_n) \) & Added noise \\
\( T_{\text{escape}} \) & Expected steps to escape saddle \\
\( \delta \) & Escape neighborhood radius \\
\( y_0 \) & Initialization along unstable direction \\
\( S_k \subset \mathbb{R}^n \) & Random subspace \\
\( m \) & Subspace dimension (\( m \ll n \)) \\
\( \mathcal{P}_{S_k} \) & Projection onto \( S_k \) \\
\( \epsilon \) & Accuracy threshold on gradient norm \\
\hline
\end{tabular}
\caption{Notation used throughout the paper.}
\label{tab:notation}
\end{table}

\subsection{Problem Setting}

We study minimization of \( f \in C^2(\mathbb{R}^n) \) containing strict saddles, where \( \nabla f(x^*)=0 \) and \( \lambda_{\min}(H(x^*))<0 \). Our aim is scalable algorithms that provably escape such saddles and converge efficiently in high dimensions.

\subsection{Assumptions}

Throughout, we assume:
\begin{enumerate}
    \item Lipschitz continuous gradient with constant \( L>0 \);
    \item Uniformly bounded Hessian operator norm;
    \item Initial point \( x_0 \) satisfies \( \|\nabla f(x_0)\|\leq G \);
    \item All saddles are strict (\( \lambda_{\min}(H(x_s))<0 \)).
\end{enumerate}

\subsection{Framework Summary}

Our framework integrates four dimension-aware mechanisms:

\textbf{1) Gradient flow instability.} Strict saddles repel trajectories along unstable eigendirections, with divergence probability increasing in higher dimensions.

\textbf{2) Stochastic perturbation and escape time.} For
\[
x_{k+1} = x_k - \eta_k \nabla f(x_k) + \eta_k \zeta_k,
\]
we show
\[
\mathbb{E}[T_{\text{escape}}] \approx \frac{1}{\eta_k \gamma} \log \Bigl(\frac{\delta}{|y_0|}\Bigr),
\]
highlighting how curvature and noise govern escape dynamics.

\textbf{3) Adaptive learning rates.} Step sizes
\[
\eta_k = \frac{\alpha}{\sqrt{v_k}+\epsilon}, \quad v_k\approx \|\nabla f(x_k)\|^2,
\]
enable larger steps in flat regions and smaller steps near sharp curvature, balancing speed and stability.

\textbf{4) Random subspace descent.} Updates project onto random subspaces \( S_k \) of dimension \( m=\mathcal{O}(\log n) \):
\[
x_{k+1}=\mathcal{P}_{S_k}(x_k-\eta_k \nabla f(x_k)),
\]
yielding convergence in
\[
\mathbb{E}[T_{\text{global}}]=\mathcal{O}\Bigl(\frac{\log n}{\epsilon^2}\Bigr),
\]
while reducing computational cost.

Together, these strategies provably escape strict saddles and maintain scalability. Experiments show each component's effect in synthetic and applied high-dimensional problems.

\section{Illustrative Examples}

We isolate key components through three examples.

\subsection{Modified Rosenbrock Function: Saddle Escape}

In the 100-dimensional Rosenbrock function,
\[
f(x) = \sum_{i=1}^{99} \bigl[100(x_{i+1}-x_i^2)^2+(1-x_i)^2\bigr],
\]
noise-injected updates
\[
x_{k+1}=x_k-\eta_k \nabla f(x_k)+\eta_k \zeta_k
\]
amplify unstable directions, accelerating escape from saddle regions where standard gradient descent stagnates.

\subsection{Quadratic Saddle System: Adaptive Step Sizes}

For
\[
f(x)=\tfrac12 x^\top Q x, \quad Q=\mathrm{diag}(-I_k,I_{n-k}), \quad n=500, k=10,
\]
adaptive steps
\[
\eta_k=\frac{\alpha}{\sqrt{v_k}+\epsilon}
\]
permit cautious updates near steep directions and larger steps in flat zones, stabilizing escape from the saddle at the origin.

\subsection{Sparse Logistic Regression: Subspace Descent}

On a regularized logistic loss with \( w \in \mathbb{R}^{1000} \) and \( m=10^4 \) samples,
\[
f(w)=\frac{1}{m}\sum_{i=1}^m \log\bigl(1+\exp(-y_i w^\top x_i)\bigr) + \lambda \|w\|_1,
\]
projected updates onto random subspaces \( S_k \) of dimension 30:
\[
w_{k+1}=\mathcal{P}_{S_k}(w_k-\eta_k \nabla f(w_k))
\]
maintain descent while reducing complexity.

\section{Theoretical Results}

We combine geometric flow analysis, spectral perturbation, and stochastic modeling to derive convergence and saddle-avoidance guarantees. Continuous-time dynamics show trajectories almost surely escape strict saddles; discrete-time analogs quantify escape time via curvature and noise. Adaptive step sizes improve speed and stability, while subspace descent yields convergence rates scaling logarithmically with dimension.

Experiments validate these claims. Precisely, Hessian-based detection confirms saddle instability; noise expedites escape; adaptivity accelerates convergence; and subspace updates preserve descent in large dimensions. Together, curvature sensitivity and stochastic design reliably navigate complex landscapes.

\begin{thm}[Instability of Strict Saddles under Gradient Flow]
\label{thm:instability}
Let \( f\in C^2(\mathbb{R}^n) \) and \( x^* \) be a strict saddle (\( \nabla f(x^*)=0 \), \( \lambda_{\min}(H(x^*))<0 \)). Then, except for a measure-zero set of initial points \( x(0) \), trajectories of
\[
\frac{dx(t)}{dt}=-\nabla f(x(t))
\]
do not converge to \( x^* \). Under standard random matrix models, the chance of converging to a local minimum increases exponentially with \( n \).
\end{thm}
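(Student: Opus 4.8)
The plan is to combine the Center--Stable Manifold Theorem with a flow-invariance argument of the type used in the ``gradient descent converges to minimizers'' literature, and then to overlay a random-matrix large-deviation estimate for the second assertion. Write \( F(x) = -\nabla f(x) \), so that the gradient flow is \( \dot x = F(x) \) with flow maps \( \phi_t \). The Lipschitz-gradient assumption forces \( \|\nabla f(x)\| \le \|\nabla f(0)\| + L\|x\| \), i.e. \( F \) has linear growth, so by Gr\"onwall every trajectory exists for all \( t\in\mathbb{R} \); since \( F\in C^1 \) (because \( f\in C^2 \)), each \( \phi_t \) is a \( C^1 \) diffeomorphism of \( \mathbb{R}^n \). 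At the strict saddle we have \( DF(x^*) = -H(x^*) \), and since \( \lambda_{\min}(H(x^*))<0 \) the matrix \( -H(x^*) \) has an eigenvalue with strictly positive real part; hence the unstable subspace \( E^u \) is nontrivial and \( \dim(E^s\oplus E^c) \le n-1 \).

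\textbf{Local obstruction.} Invoke the local Center--Stable Manifold Theorem: there is a neighborhood \( U\ni x^* \) and an embedded \( C^1 \) submanifold \( W^{cs}_{\mathrm{loc}}\subset U \) with \( T_{x^*}W^{cs}_{\mathrm{loc}} = E^s\oplus E^c \), such that any forward trajectory remaining in \( U \) for all \( t\ge 0 \) lies in \( W^{cs}_{\mathrm{loc}} \). In particular any trajectory with \( \lim_{t\to\infty}x(t)=x^* \) is eventually contained in \( W^{cs}_{\mathrm{loc}} \). Since \( \dim W^{cs}_{\mathrm{loc}}\le n-1 \), this manifold has Lebesgue measure zero in \( \mathbb{R}^n \).

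\textbf{Globalization.} Let \( \mathcal{W}(x^*) = \{\,x_0 : \lim_{t\to\infty}\phi_t(x_0)=x^*\,\} \). By the previous step, for each \( x_0\in\mathcal{W}(x^*) \) there is \( T\ge 0 \), which may be taken rational by flowing slightly further forward and using continuity of \( \phi_t \), with \( \phi_T(x_0)\in W^{cs}_{\mathrm{loc}} \), i.e. \( x_0\in\phi_{-T}(W^{cs}_{\mathrm{loc}}) \). Hence
\[
\mathcal{W}(x^*) \subseteq \bigcup_{q\in\mathbb{Q}_{\ge 0}} \phi_{-q}\bigl(W^{cs}_{\mathrm{loc}}\bigr).
\]
Each \( \phi_{-q} \) is a diffeomorphism, so it sends the null set \( W^{cs}_{\mathrm{loc}} \) to a null set, and a countable union of null sets is null; therefore \( \mathcal{W}(x^*) \) is Lebesgue-null. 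Covering the (Lindel\"of) set of strict saddles by countably many such neighborhoods and taking the union of the corresponding \( \mathcal{W}(x_s) \) keeps the exceptional set null, which proves the first claim.

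\textbf{Random-matrix scaling and the hard part.} For the second statement, model \( H(x^*) \) by a suitably scaled GOE (or general Wigner) ensemble. The event ``\( x^* \) is a local minimum rather than a strict saddle'' is \( \{\lambda_{\min}(H(x^*))\ge 0\} \); by the large-deviation principle for the smallest eigenvalue of Wigner matrices (the left-tail rate function is strictly positive away from the spectral edge \( -2 \)), \( \Pr[\lambda_{\min}(H(x^*))\ge 0] \le e^{-cn} \) for some \( c>0 \) (in fact \( e^{-\Theta(n^2)} \) in the natural normalization). Dually, with probability \( \ge 1-e^{-cn} \) the critical point is a strict saddle whose index concentrates at \( \Theta(n) \) by the semicircle law, so \( W^{cs}_{\mathrm{loc}} \) has codimension linear in \( n \), and a fixed-radius tube around a codimension-\( d \) submanifold has volume \( O(\rho^{d}) \), exponentially small in \( n \); combining with the globalization step gives the asserted exponential-in-\( n \) concentration toward minimizers. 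The main obstacle is twofold: first, the globalization step genuinely needs completeness and the diffeomorphism property of the flow (this is exactly where the Lipschitz-gradient hypothesis is used) together with a careful countable-subcover argument when saddles are not isolated; second, one must pin down what ``increases exponentially with \( n \)'' means, since a single measure-zero set is already avoided with probability one under any absolutely continuous initialization --- the honest quantitative content lives in the large-deviation bound on \( \lambda_{\min} \) and in the exponentially shrinking tubular neighborhoods of the high-codimension stable manifolds, and that is the step I would expect to require the most care to state precisely.
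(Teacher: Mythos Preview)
Your argument is correct and considerably more rigorous than the paper's own proof. The paper simply linearizes at $x^*$, writes the solution of the \emph{linear} system $\dot{\tilde x}=-H\tilde x$ as $\sum_i c_i e^{-\lambda_i t}v_i$, and observes that convergence to $0$ forces $c_j=0$ along each unstable eigendirection, a hyperplane condition of measure zero; the high-dimensional claim is dispatched in a single sentence (``more negative eigenvalues increase divergence probability''). In particular the paper never passes from the linearized flow to the genuine nonlinear one, nor does it globalize from a neighborhood of $x^*$ to all of $\mathbb{R}^n$.

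Your route via the Center--Stable Manifold Theorem together with the $\phi_{-q}$-pullback/countable-union argument is exactly what is needed to make that passage rigorous, and it is the standard mechanism in the ``gradient descent converges to minimizers'' line of work you allude to. What this buys you over the paper is an honest proof for the nonlinear flow, a clean treatment of non-isolated saddles via Lindel\"of, and an explicit identification of where the Lipschitz-gradient hypothesis enters (global existence and the diffeomorphism property of $\phi_t$). Your handling of the random-matrix assertion is likewise sharper: you correctly flag that the statement is ambiguous as written, and you supply the two places where a genuine exponential-in-$n$ rate can live (the Wigner left-tail large deviation for $\lambda_{\min}$, and the $\rho^{d}$ tube volume around a codimension-$d$ manifold with $d=\Theta(n)$). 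The paper's approach is shorter and conveys the right intuition, but yours is the one that actually closes the argument.
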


\begin{proof}
Shift \( \tilde{x}(t)=x(t)-x^* \). Linearizing:
\[
\frac{d\tilde{x}(t)}{dt}=-H \tilde{x}(t), \quad H=\nabla^2 f(x^*).
\]
Diagonalize \( H=V \Lambda V^\top \):
\[
\tilde{x}(t)=\sum_{i=1}^n c_i e^{-\lambda_i t} v_i.
\]
If some \( \lambda_j<0 \), \( c_j e^{-\lambda_j t} \) diverges unless \( c_j=0 \), which is a measure-zero hyperplane. In high dimensions, more negative eigenvalues increase divergence probability.
\end{proof}

This shows gradient flow almost always avoids strict saddles. Next, we analyze discrete-time stochastic escape.

\noindent
Building on the instability of strict saddles under gradient flow, we next develop discrete-time results that quantify escape dynamics under stochastic and adaptive methods, and prove convergence guarantees for subspace-based and curvature-aware strategies. Each theorem is presented with a concise derivation and discussion clarifying its role in the overall framework.

\begin{thm}[Escape Time under Stochastic Gradient Descent]
\label{thm:sgd}
Let \( f \in C^2(\mathbb{R}^n) \) and suppose \( x^* = 0 \) is a strict saddle where \( H=\nabla^2 f(x^*) \) has eigenvalue \(\lambda_1=-\gamma<0\). Consider
\[
x_{k+1} = x_k - \eta \nabla f(x_k) + \eta \zeta_k, 
\quad \zeta_k \sim \mathcal{N}(0,\sigma^2 I_n).
\]
Then for constant step size \(\eta>0\),
\[
\mathbb{E}[T_{\mathrm{escape}}] =
\begin{cases}
\mathcal{O}(n^{1/2}), & \text{if } \sigma^2=\Theta(1),\\[6pt]
\mathcal{O}(n), & \text{if } \sigma^2=\Theta(1/n).
\end{cases}
\]
\end{thm}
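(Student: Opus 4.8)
The plan is to restrict attention to the ``escape phase'' in which $x_k$ still lies in a small ball $B(0,\delta)$, linearize the update there, reduce the escape question to a one-dimensional first-passage problem along the most unstable eigendirection, and only at the end read off the dependence on $n$ from the way the injected noise accumulates under the two scalings $\sigma^2=\Theta(1)$ and $\sigma^2=\Theta(1/n)$. Concretely, by the $C^2$ assumption with bounded Hessian, for $x_k\in B(0,\delta)$ one writes $\nabla f(x_k)=Hx_k+e_k$ with $\|e_k\|=\mathcal{O}(\delta\|x_k\|)$, so the iteration becomes $x_{k+1}=(I-\eta H)x_k+\eta\zeta_k+\eta e_k$; exactly as in the linearization behind Theorem~\ref{thm:instability}, if $\delta$ is a small multiple of $\gamma$ over the Hessian-Lipschitz constant then $\eta e_k$ is dominated by the linear-plus-noise terms throughout $\{k<T_{\mathrm{escape}}\}$.

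Next I would diagonalize $H=V\Lambda V^\top$ and track the coordinate $y_k=\langle v_1,x_k\rangle$ along the eigenvector with $Hv_1=-\gamma v_1$. Rotational invariance of the Gaussian makes $\xi_k:=\langle v_1,\zeta_k\rangle$ an i.i.d.\ $\mathcal{N}(0,\sigma^2)$ sequence independent of the remaining coordinates, and the linearized dynamics give $y_{k+1}=(1+\eta\gamma)y_k+\eta\xi_k$ up to the controlled remainder. Unrolling,
\[
y_k=(1+\eta\gamma)^k y_0+\eta\sum_{j=0}^{k-1}(1+\eta\gamma)^{k-1-j}\xi_j,
\]
so $y_k$ is Gaussian with mean $(1+\eta\gamma)^k y_0$ and variance $s_k^2=\eta^2\sigma^2\frac{(1+\eta\gamma)^{2k}-1}{(1+\eta\gamma)^2-1}$. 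Declaring escape when $|y_k|$ first exceeds $\delta$ — which by the previous step forces $x_k$ out of $B(0,\delta)$ along the unstable subspace and $f$ to drop by $\Theta(\gamma\delta^2)$ — turns $\mathbb{E}[T_{\mathrm{escape}}]$ into a first-passage time for this explicit Gaussian process: in the drift-dominated regime it returns the heuristic $\mathbb{E}[T_{\mathrm{escape}}]\asymp(\eta\gamma)^{-1}\log(\delta/|y_0|)$, and in the noise-dominated regime ($y_0\approx0$, $\eta\gamma$ small) it is set by the diffusive balance $s_k\asymp\delta$, i.e.\ $\mathbb{E}[T_{\mathrm{escape}}]\asymp\delta^2/(\eta^2\sigma^2)$ up to a curvature factor.

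The dimension then enters through the escape geometry, and this is the delicate part. With a near-saddle random start one has $|y_0|=\Theta(\|x_0\|/\sqrt{n})$; meanwhile the noise simultaneously excites the $\Theta(n)$ nearly-flat/unstable modes guaranteed (under the random-matrix model of Theorem~\ref{thm:instability}) near the saddle, whose aggregate magnitude is $\Theta(\sigma\sqrt{\eta n})$ and fixes the effective scale at which $B(0,\delta)$ is actually left. Feeding $\sigma^2=\Theta(1)$ into the noise-dominated estimate of the previous step, together with these scalings of $|y_0|$, $\delta$, and the unstable-mode count, collapses the first-passage time to $\Theta(n^{1/2})$; replacing $\sigma^2=\Theta(1)$ by $\sigma^2=\Theta(1/n)$ shrinks the per-coordinate noise power by a factor $n$ and inflates the same first-passage time to $\Theta(n)$, which yields the two stated cases.

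The main obstacle is precisely this last step, coupled with the uniform validity claimed for the linearization: one must (i) bound the Taylor remainder $e_k$ over an escape window that is now polynomially long in $n$ rather than $\mathcal{O}(\log n)$, which needs a sharp high-probability control of $\sup_{k<T_{\mathrm{escape}}}\|x_k\|$; (ii) glue the drift-dominated $(1+\eta\gamma)^k$-growth sub-phase to the diffusion-dominated sub-phase without the crossover injecting spurious powers of $n$; and (iii) make rigorous — rather than heuristic — how $\delta$, the initialization, the step size $\eta$, and the number of escape-conducive eigendirections scale with $n$, since it is exactly this bookkeeping that distinguishes the $\mathcal{O}(n^{1/2})$ and $\mathcal{O}(n)$ regimes. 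A fully rigorous version would likely trade the single-trajectory variance computation for a two-point coupling argument (two runs differing only in the $v_1$-component of the initial perturbation), showing that at least one of them escapes within the claimed time with high probability.
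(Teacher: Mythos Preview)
Your linearization, diagonalization, and reduction to the one-dimensional recursion $y_{k+1}=(1+\eta\gamma)y_k+\eta\xi_k$ with the explicit mean/variance formula match the paper's argument essentially line for line; the paper does exactly this and also identifies escape with $|y_k^{(1)}|\ge\delta$, recovering $k\gtrsim(\eta\gamma)^{-1}\log(\delta/|y_0^{(1)}|)$.

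Where you and the paper part ways is the mechanism by which $n$ enters. You keep $\eta$ fixed and try to extract the $n^{1/2}$ versus $n$ scaling from the \emph{geometry}: the projection $|y_0|\sim\|x_0\|/\sqrt{n}$ of a random start, the $\Theta(n)$ escape-conducive modes from the random-matrix heuristic, and the resulting effective exit radius. The paper instead treats $\eta$ as the dimension-dependent quantity: it imposes a ``bounded variance'' requirement $\mathrm{Var}[y_k^{(1)}]\approx \eta\sigma^2 a^{2k}/(2\gamma)\lesssim 1$, asserts that this forces $\eta=\mathcal{O}(n^{-1/2})$ when $\sigma^2=\Theta(1)$ and $\eta=\Theta(1)$ when $\sigma^2=\Theta(1/n)$, and then simply substitutes into $k\sim 1/(\eta\gamma)$. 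This is a much shorter path and avoids all three of the obstacles you list (remainder control over polynomial-in-$n$ horizons, gluing the drift and diffusion sub-phases, bookkeeping of $\delta$, $|y_0|$, and the number of unstable directions).

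That said, the paper's step is itself heuristic: the single-coordinate variance inequality it writes down contains no explicit $n$, so the jump to $\eta=\mathcal{O}(n^{-1/2})$ is asserted rather than derived (presumably the intended reading is that the \emph{aggregate} noise $\eta\|\zeta_k\|\sim\eta\sigma\sqrt{n}$ must stay within the linearization ball). Your route is more honest about where the dimension actually lives, and your self-identified gap is real; but if your goal is to reproduce the paper's proof, you should drop the fixed-$\eta$ geometric accounting and instead argue that variance/stability considerations pin $\eta$ as a function of $n$ and $\sigma^2$, then read the escape time off $1/(\eta\gamma)$.
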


\begin{proof}
We diagonalize \( H = U \Lambda U^\top \), so \(y_k = U^\top x_k\). The update in coordinate form becomes
\[
y_{k+1} = y_k - \eta \Lambda y_k + \eta \tilde{\zeta}_k,
\]
where \(\tilde{\zeta}_k = U^\top \zeta_k \sim \mathcal{N}(0,\sigma^2 I_n)\). 

Focus on the first coordinate corresponding to \(\lambda_1=-\gamma<0\):
\[
y_{k+1}^{(1)} = (1+\eta \gamma) y_k^{(1)} + \eta \tilde{\zeta}_k^{(1)}.
\]

Define \(a := 1+\eta \gamma>1\). Unroll recursively:
\[
y_k^{(1)} = a^k y_0^{(1)} + \eta \sum_{j=0}^{k-1} a^{k-1-j} \tilde{\zeta}_j^{(1)}.
\]

Variance of the stochastic sum is
\[
\eta^2 \sigma^2 \sum_{j=0}^{k-1} a^{2(k-1-j)} 
= \eta^2 \sigma^2 \cdot \frac{a^{2k}-1}{a^2-1}.
\]

Escape occurs when \(\mathbb{E}[(y_k^{(1)})^2]^{1/2} \ge \delta\). The dominant term in large \(k\) is the geometric amplification:
\[
a^k \approx e^{k \log(1+\eta \gamma)} \approx e^{k \eta \gamma}.
\]

We require
\[
a^k |y_0^{(1)}| \approx |y_0^{(1)}| e^{k \eta \gamma} \ge \delta,
\]
so
\[
k \ge \frac{1}{\eta \gamma} \log\Bigl(\frac{\delta}{|y_0^{(1)}|}\Bigr).
\]

Now, \(\eta\) depends on dimension via variance control. To ensure bounded variance:
\[
\mathrm{Var}[y_k^{(1)}] \approx \eta^2 \sigma^2 \cdot a^{2k} / (a^2-1) \lesssim 1.
\]

Since \(a=1+\eta \gamma\approx 1\) for small \(\eta\), \(a^2-1\approx 2\eta \gamma\). So:
\[
\eta^2 \sigma^2 \cdot a^{2k} / (2\eta \gamma) \lesssim 1,
\]
yielding \(\eta = \mathcal{O}(n^{-1/2})\) when \(\sigma^2=\Theta(1)\), and \(\eta=\Theta(1)\) when \(\sigma^2=\Theta(1/n)\). Substituting into \(k=1/(\eta \gamma) \log(\delta/|y_0|)\) proves the stated scaling.
\end{proof}

\bigskip

\begin{thm}[Escape Acceleration via Adaptive Learning Rates]
\label{thm:adaptive}
Let \( x_{k+1} = x_k - \eta_k \nabla f(x_k) \) with
\[
\eta_k = \frac{\alpha}{\sqrt{v_k}+\epsilon}, 
\quad v_k \approx \|\nabla f(x_k)\|^2,
\]
near a strict saddle where \(\lambda_{\min}(H(x_k))=-\gamma<0\). Then
\[
\mathbb{E}[T_{\mathrm{escape}}] 
= \mathcal{O}\Bigl(\frac{1}{\min_k \eta_k \gamma} \log \bigl(\tfrac{\delta}{|y_0|}\bigr)\Bigr).
\]
\end{thm}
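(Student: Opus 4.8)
The plan is to reduce the adaptive-step analysis to the constant-step escape argument of Theorem~\ref{thm:sgd} by controlling the adaptive learning rate $\eta_k$ from below along the escape trajectory. First I would diagonalize $H(x^*) = U\Lambda U^\top$ and pass to rotated coordinates $y_k = U^\top x_k$, isolating the unstable coordinate $y_k^{(1)}$ associated with $\lambda_{\min} = -\gamma$. As in the previous proof, near the saddle the linearized recursion reads $y_{k+1}^{(1)} = (1+\eta_k\gamma)\,y_k^{(1)} + \text{(higher-order terms)}$, so that $|y_k^{(1)}| \gtrsim |y_0^{(1)}|\prod_{j=0}^{k-1}(1+\eta_j\gamma) \ge |y_0^{(1)}|\,(1+\eta_{\min}\gamma)^k$ where $\eta_{\min} := \min_{j<k}\eta_j$. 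Taking logarithms, escape ($|y_k^{(1)}| \ge \delta$) is guaranteed once $k \ge \frac{1}{\log(1+\eta_{\min}\gamma)}\log(\delta/|y_0^{(1)}|) = \mathcal{O}\!\bigl(\frac{1}{\eta_{\min}\gamma}\log(\delta/|y_0|)\bigr)$, which is exactly the claimed bound.

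The key remaining step is to justify that $\eta_{\min} = \min_k \eta_k$ is itself bounded below by a meaningful quantity, i.e. that the denominator $\sqrt{v_k}+\epsilon$ does not blow up along the trajectory. Here I would invoke the Lipschitz-gradient assumption: since $v_k \approx \|\nabla f(x_k)\|^2$ and $\|\nabla f(x_k)\| = \|\nabla f(x_k) - \nabla f(x^*)\| \le L\|x_k - x^*\| \le L\delta$ as long as the iterate remains in the escape neighborhood $\{x : \|x - x^*\| \le \delta\}$, we get $\eta_k \ge \frac{\alpha}{L\delta + \epsilon}$ uniformly over the escape phase. Thus $\eta_{\min} \ge \alpha/(L\delta+\epsilon) = \Theta(1)$ in the small-$\delta$ regime, and in particular the adaptive method never takes steps smaller than this floor while escaping — which is the mechanism behind the acceleration, since in flat regions $\|\nabla f(x_k)\|$ is small and $\eta_k$ is correspondingly close to $\alpha/\epsilon$, larger than a conservative constant step would be.

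I would then assemble the pieces: combine the lower bound on $\eta_{\min}$ with the logarithmic escape-iteration count, and note that passing back from the linearized dynamics to the true dynamics costs only a constant factor, since the Hessian is bounded (Assumption~2) and the neighborhood radius $\delta$ is chosen small enough that the quadratic remainder is dominated by the linear term along the unstable direction — a standard Hartman--Grobman-type localization that I would state rather than prove in detail. Finally, if one wishes to track the stochastic version (matching the $\zeta_k$ term of the earlier theorem), the noise contributes the same geometric-sum variance $\eta_{\min}^2\sigma^2(a^{2k}-1)/(a^2-1)$ as before, which only helps escape and does not affect the upper bound.

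The main obstacle I anticipate is making the lower bound on $\eta_k$ rigorous \emph{uniformly in $k$} without circular reasoning: the bound $\|\nabla f(x_k)\| \le L\delta$ only holds while $x_k$ is still inside the $\delta$-ball, but that is exactly the escape phase we care about, so the argument is self-consistent provided we define $T_{\mathrm{escape}}$ as the first exit time and verify that $v_k$ stays bounded up to (and including) that time. A secondary subtlety is that $v_k$ is only an \emph{approximation} to $\|\nabla f(x_k)\|^2$ (e.g. an exponential moving average), so one must either assume a two-sided comparison $c_1\|\nabla f(x_k)\|^2 \le v_k \le c_2\|\nabla f(x_k)\|^2$ or carry an explicit error term; I would state this as a hypothesis to keep the proof clean.
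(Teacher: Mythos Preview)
Your proposal is correct and follows essentially the same approach as the paper: diagonalize $H$, track the unstable coordinate via the linearized recursion $y_{k+1}^{(1)} = (1+\eta_k\gamma)y_k^{(1)}$, lower-bound the product $\prod_j(1+\eta_j\gamma)$ by replacing each $\eta_j$ with $\eta_{\min}$, and take logarithms to obtain the escape-time bound. In fact your version is considerably more careful than the paper's own argument --- the paper does not justify the lower bound on $\eta_{\min}$, address the higher-order remainder, or discuss the $v_k \approx \|\nabla f(x_k)\|^2$ approximation, all of which you handle explicitly.
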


\begin{proof}
Diagonalize \(H\) and focus on unstable coordinate:
\[
y_{k+1}^{(1)} = (1+\eta_k \gamma) y_k^{(1)}.
\]

By recursion:
\[
y_k^{(1)} = y_0^{(1)} \prod_{j=0}^{k-1} (1+\eta_j \gamma).
\]

Taking logarithms:
\[
\log |y_k^{(1)}| = \log |y_0^{(1)}| + \sum_{j=0}^{k-1} \log(1+\eta_j \gamma).
\]

Since \(\eta_j \gamma>0\) and small, \(\log(1+\eta_j \gamma) \approx \eta_j \gamma\). Escape at \(|y_k^{(1)}|\ge \delta\) implies:
\[
\sum_{j=0}^{k-1} \eta_j \gamma \ge \log \Bigl(\frac{\delta}{|y_0^{(1)}|}\Bigr).
\]

If \(\eta_j \ge \min_j \eta_j\), then
\[
k \cdot \min_j \eta_j \gamma \ge \log \Bigl(\frac{\delta}{|y_0^{(1)}|}\Bigr),
\]
so
\[
k \ge \frac{1}{\min_j \eta_j \gamma} \log \Bigl(\frac{\delta}{|y_0^{(1)}|}\Bigr).
\]
This yields the stated bound.
\end{proof}

\noindent
Adaptive step sizes amplify updates in flat regions (small gradient norm), decreasing escape time.

\begin{thm}[Random Subspace Descent: Global Convergence]
\label{thm:subspace}
Let \(f \in C^1(\mathbb{R}^n)\) be bounded below, with \(L\)-Lipschitz continuous gradients. Consider iterates:
\[
x_{k+1} = x_k + \alpha d_k, 
\quad d_k \in S_k, \ \|d_k\| \le r,
\]
where \(S_k \subset \mathbb{R}^n\) is a random subspace of dimension \(m = \mathcal{O}(\log n)\), and \(\alpha = \Theta(1/L)\). Then
\[
\mathbb{E}\Bigl[\min_{0 \le k < T} \|\nabla f(x_k)\|^2\Bigr] \le \epsilon^2
\quad \Rightarrow \quad
\mathbb{E}[T] = \mathcal{O}\Bigl(\frac{\log n}{\epsilon^2}\Bigr).
\]
\end{thm}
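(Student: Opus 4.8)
The plan is to run a standard descent-lemma argument, but with the twist that each step only moves within a random $m$-dimensional subspace $S_k$, so the per-step decrease must be controlled in expectation over the choice of $S_k$. First I would invoke $L$-smoothness to write the basic quadratic upper bound $f(x_{k+1}) \le f(x_k) + \alpha \langle \nabla f(x_k), d_k\rangle + \tfrac{L\alpha^2}{2}\|d_k\|^2$. The key is to choose $d_k$ inside $S_k$ to be (a scaled version of) the projected gradient $-\mathcal{P}_{S_k}\nabla f(x_k)$, normalized so that $\|d_k\|\le r$; then $\langle \nabla f(x_k), d_k\rangle = -c\|\mathcal{P}_{S_k}\nabla f(x_k)\|^2$ for an appropriate positive $c$, and with $\alpha = \Theta(1/L)$ the linear term dominates the quadratic one, giving $f(x_k) - f(x_{k+1}) \ge \kappa \|\mathcal{P}_{S_k}\nabla f(x_k)\|^2$ for a constant $\kappa>0$ depending on $\alpha, L, r$.

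Next I would take expectation over the random subspace $S_k$, conditioned on $x_k$. For the standard random-subspace models (e.g. $S_k$ the span of $m$ Haar-random orthonormal vectors, or a Johnson--Lindenstrauss-type Gaussian sketch), one has the unbiasedness/contraction identity $\mathbb{E}\big[\|\mathcal{P}_{S_k} v\|^2\big] = \tfrac{m}{n}\|v\|^2$ for any fixed $v\in\mathbb{R}^n$. Applying this with $v = \nabla f(x_k)$ yields $\mathbb{E}[f(x_k) - f(x_{k+1}) \mid x_k] \ge \kappa \tfrac{m}{n}\|\nabla f(x_k)\|^2$. Here is where $m = \mathcal{O}(\log n)$ enters: the factor $m/n$ would naively cost a factor $n/m = \Theta(n/\log n)$ in the iteration count, so to obtain the claimed $\mathcal{O}(\log n / \epsilon^2)$ bound the per-step cost normalization must be reinterpreted --- the intended reading is that the step size or the descent contribution is rescaled by $n/m$ (an adaptive $\alpha = \Theta(n/(mL))$), or that "time" $T$ counts subspace-updates weighted by their $\mathcal{O}(m)$ arithmetic cost, so the effective progress per unit work is $\Theta(1/n)\cdot(n/m) = \Theta(1/m) = \Theta(1/\log n)$ relative to full-gradient descent. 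I would state this normalization explicitly as part of the hypothesis $\alpha = \Theta(1/L)$ combined with the subspace rescaling.

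Then I would telescope: summing the conditional decrease from $k=0$ to $T-1$ and using that $f$ is bounded below by some $f^\star$ gives $\kappa' \sum_{k=0}^{T-1} \mathbb{E}\|\nabla f(x_k)\|^2 \le f(x_0) - f^\star$, where $\kappa' = \kappa \cdot \Theta(1/\log n)$ after the normalization. Hence $\min_{0\le k<T}\mathbb{E}\|\nabla f(x_k)\|^2 \le \tfrac{1}{T}\sum_k \mathbb{E}\|\nabla f(x_k)\|^2 \le \tfrac{(f(x_0)-f^\star)\log n}{\kappa'' T}$, and setting the right-hand side equal to $\epsilon^2$ yields $T = \mathcal{O}(\log n / \epsilon^2)$, which is the claim (with $\mathbb{E}[\min_k\|\nabla f(x_k)\|^2] \le \min_k \mathbb{E}\|\nabla f(x_k)\|^2$ by Jensen in the direction we need, i.e. the min of expectations bounds what we can certify).

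\textbf{Main obstacle.} The genuine difficulty is not the descent lemma or the telescoping --- those are routine --- but justifying the dimension scaling honestly. The bare projection identity gives a $m/n$ slowdown, so the $\log n$ rate only holds under a specific accounting convention (cost-per-iteration being $\Theta(m)$ rather than $\Theta(n)$, or an explicit $n/m$ step-size inflation that must be checked to remain compatible with the $L$-smoothness descent condition, i.e. that inflating $\alpha$ by $n/m$ does not blow up the $\tfrac{L\alpha^2}{2}\|d_k\|^2$ term). Making that trade-off precise --- and verifying the variance of $\|\mathcal{P}_{S_k}\nabla f(x_k)\|^2$ is controlled enough (a concentration bound for the projected norm, which is where $m=\Omega(\log n)$ is actually used) so that the inflated step does not cause ascent with non-negligible probability --- is the crux of the argument.
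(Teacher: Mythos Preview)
Your skeleton---descent lemma applied to the projected-gradient step $d_k=-\mathcal{P}_{S_k}\nabla f(x_k)$, then telescoping $f(x_0)-f^\star \ge \tfrac{\alpha\rho}{2}\sum_k\|\nabla f(x_k)\|^2$ and extracting the min---is exactly what the paper does. The divergence is in how the projection constant is obtained. You compute the honest expectation $\mathbb{E}\|\mathcal{P}_{S_k}v\|^2=(m/n)\|v\|^2$ and then try to recover the $\log n$ rate through a cost-per-iteration or step-size reinterpretation. The paper does \emph{not} do this: it invokes the Johnson--Lindenstrauss lemma to assert a high-probability lower bound $\|\mathcal{P}_{S_k}\nabla f(x_k)\|^2\ge\rho\|\nabla f(x_k)\|^2$ and then, at the very end, sets $\rho=\Theta(1/\log n)$ ``from concentration bounds'' and substitutes directly. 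There is no step-size inflation and no work-accounting argument; the claimed $T=\mathcal{O}(\log n/\epsilon^2)$ comes from plugging $\rho=\Theta(1/\log n)$ into $T\le 2(f(x_0)-f^\star)/(\alpha\rho\epsilon^2)$.

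Your ``main obstacle'' paragraph is therefore exactly the point where the two arguments part ways, and your instinct that something is being swept under the rug is sound. Standard JL concentration for an orthogonal projection onto a Haar-random $m$-subspace gives $\|\mathcal{P}_{S_k}v\|^2\approx (m/n)\|v\|^2$ with fluctuations controlled once $m=\Omega(\log n)$---i.e.\ $m=\Theta(\log n)$ is what makes the \emph{relative} error small, not what makes $\rho$ equal to $1/\log n$. So the paper's $\rho=\Theta(1/\log n)$ is asserted rather than derived, and your $m/n$ calculation is the correct first-principles value. In short: you have reproduced the paper's argument and additionally located the step the paper glosses over; your cost-accounting fix is one way to make the stated rate defensible, but it is not what the paper actually writes.
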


\begin{proof}
By the Johnson-Lindenstrauss lemma, for random projection \(\mathcal{P}_{S_k}\), there exists constant \(\rho>0\) such that with high probability:
\[
\|\mathcal{P}_{S_k} \nabla f(x_k)\|^2 \ge \rho \|\nabla f(x_k)\|^2.
\]

Consider the update:
\[
x_{k+1} = x_k - \alpha \mathcal{P}_{S_k} \nabla f(x_k).
\]

Using the descent lemma (for \(L\)-smooth functions):
\[
f(x_{k+1}) \le f(x_k) 
- \alpha \langle \nabla f(x_k), \mathcal{P}_{S_k} \nabla f(x_k) \rangle
+ \frac{L \alpha^2}{2} \|\mathcal{P}_{S_k} \nabla f(x_k)\|^2.
\]

Since \(\mathcal{P}_{S_k}\) is an orthogonal projection:
\[
\langle \nabla f(x_k), \mathcal{P}_{S_k} \nabla f(x_k) \rangle 
= \|\mathcal{P}_{S_k} \nabla f(x_k)\|^2.
\]

Therefore:
\[
f(x_{k+1}) - f(x_k) 
\le - \alpha \|\mathcal{P}_{S_k} \nabla f(x_k)\|^2 
+ \frac{L \alpha^2}{2} \|\mathcal{P}_{S_k} \nabla f(x_k)\|^2.
\]

Factor:
\[
= - \alpha \bigl(1 - \tfrac{L \alpha}{2}\bigr) \|\mathcal{P}_{S_k} \nabla f(x_k)\|^2.
\]

Choose \(\alpha \le 1/L\), then \(1 - L \alpha / 2 \ge 1/2\):
\[
\le - \frac{\alpha}{2} \|\mathcal{P}_{S_k} \nabla f(x_k)\|^2.
\]

By projection preservation:
\[
\|\mathcal{P}_{S_k} \nabla f(x_k)\|^2 \ge \rho \|\nabla f(x_k)\|^2.
\]

Hence:
\[
f(x_{k+1}) - f(x_k) \le - \frac{\alpha \rho}{2} \|\nabla f(x_k)\|^2.
\]

Sum over \(k=0,\ldots,T-1\):
\[
f(x_0) - f^* \ge \frac{\alpha \rho}{2} \sum_{k=0}^{T-1} \|\nabla f(x_k)\|^2.
\]

By definition:
\[
\min_{0 \le k < T} \|\nabla f(x_k)\|^2 \le \frac{1}{T} \sum_{k=0}^{T-1} \|\nabla f(x_k)\|^2.
\]

Therefore:
\[
f(x_0) - f^* \ge \frac{\alpha \rho}{2} T \cdot \min_{0 \le k < T} \|\nabla f(x_k)\|^2.
\]

Rearrange:
\[
T \le \frac{2(f(x_0)-f^*)}{\alpha \rho \cdot \min_{0 \le k < T} \|\nabla f(x_k)\|^2}.
\]

To ensure \(\min_{0 \le k < T} \|\nabla f(x_k)\|^2 \le \epsilon^2\):
\[
T = \mathcal{O}\Bigl(\frac{1}{\alpha \rho \epsilon^2}\Bigr).
\]

Since \(\rho = \Theta(1/\log n)\) from concentration bounds and \(\alpha=\Theta(1/L)\):
\[
T = \mathcal{O}\Bigl(\frac{\log n}{\epsilon^2}\Bigr).
\]
\end{proof}

\bigskip

\begin{thm}[Curvature-Aware Subspace Descent]
\label{thm:curvature_subspace}
Let \(f \in C^2(\mathbb{R}^n)\). At iteration \(k\), let 
\[
S_k = \mathrm{span}\{v_1,\dots,v_m\},
\]
where \(v_i\) are top \(m\) eigenvectors of 
\[
\widehat{C}_k = \sum_{j=k-h}^k \nabla f(x_j)\nabla f(x_j)^\top.
\]
Update:
\[
x_{k+1} = \mathcal{P}_{S_k}\bigl(x_k - \eta_k \nabla f(x_k)\bigr).
\]
Then under standard smoothness and boundedness assumptions,
\[
\mathbb{E}[f(x_{k+1}) - f(x_k)] \le -\mu_{\mathrm{eff}} \eta_k \|\nabla f(x_k)\|^2,
\]
with \(\mu_{\mathrm{eff}} \gg \mu\) reflecting alignment with dominant curvature.
\end{thm}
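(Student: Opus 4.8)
The plan is to reuse the $L$-smoothness / descent-lemma skeleton of Theorem~\ref{thm:subspace} and to concentrate all the new work on replacing the oblivious Johnson--Lindenstrauss constant $\rho$ by a sharp, data-dependent alignment factor. Write $g_k := \nabla f(x_k)$. Treating the update $x_{k+1} = \mathcal{P}_{S_k}(x_k - \eta_k g_k)$ in the subspace-descent regime (as in Theorem~\ref{thm:subspace} the effective step is $-\eta_k \mathcal{P}_{S_k} g_k$; the out-of-subspace motion of the iterate is handled at the end), the descent lemma together with the idempotency identity $\langle g_k, \mathcal{P}_{S_k} g_k\rangle = \|\mathcal{P}_{S_k} g_k\|^2$ gives, for $\eta_k \le 1/L$,
\[
f(x_{k+1}) - f(x_k) \le -\tfrac{\eta_k}{2}\,\|\mathcal{P}_{S_k} g_k\|^2 = -\tfrac{\eta_k}{2}\,\beta_k\,\|g_k\|^2, \qquad \beta_k := \frac{\|\mathcal{P}_{S_k} g_k\|^2}{\|g_k\|^2} \in [0,1].
\]
Thus the whole theorem reduces to a lower bound on (the conditional expectation of) $\beta_k$, after which we set $\mu_{\mathrm{eff}} := \tfrac12 \inf_k \mathbb{E}[\beta_k]$.

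For the alignment bound I would exploit that $S_k$ is the span of the top $m$ eigenvectors of the symmetric PSD matrix $\widehat{C}_k = \sum_{j=k-h}^k g_j g_j^\top$, so $\widehat{C}_k$ leaves $S_k$ and $S_k^\perp$ invariant and the cross terms vanish. Splitting $g_k = g_k^\parallel + g_k^\perp$ with $g_k^\parallel \in S_k$,
\[
\theta_k \|g_k\|^2 := g_k^\top \widehat{C}_k g_k = (g_k^\parallel)^\top \widehat{C}_k g_k^\parallel + (g_k^\perp)^\top \widehat{C}_k g_k^\perp \le \lambda_1(\widehat{C}_k)\,\|g_k^\parallel\|^2 + \lambda_{m+1}(\widehat{C}_k)\,\|g_k^\perp\|^2,
\]
where $\theta_k := g_k^\top \widehat{C}_k g_k/\|g_k\|^2 = \sum_{j=k-h}^k \langle g_k, g_j\rangle^2/\|g_k\|^2 \ge \|g_k\|^2$ (from the $j=k$ term alone). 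Substituting $\|g_k^\perp\|^2 = \|g_k\|^2 - \|g_k^\parallel\|^2$ and rearranging yields the key estimate
\[
\beta_k \ge \frac{\theta_k - \lambda_{m+1}(\widehat{C}_k)}{\lambda_1(\widehat{C}_k) - \lambda_{m+1}(\widehat{C}_k)}.
\]
The point is that this bound is \emph{independent of $n$}: when $m \ge \mathrm{rank}(\widehat{C}_k)$ one has $\lambda_{m+1}(\widehat{C}_k)=0$ and $\beta_k=1$ exactly (since $g_k g_k^\top \preceq \widehat{C}_k$ forces $g_k \in \mathrm{range}(\widehat{C}_k) \subseteq S_k$), and more generally, under the "dominant curvature" hypothesis that $\widehat{C}_k$ has an eigengap at index $m$ with $\lambda_{m+1}(\widehat{C}_k) \le (1-c)\,\theta_k$ — together with $\theta_k$ within an $h$-dependent factor of $\lambda_1(\widehat{C}_k)$, which holds whenever recent gradient magnitudes are comparable and $h = \mathcal{O}(m)$ — one gets $\beta_k = \Omega(1)$. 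By contrast an oblivious random $S_k$ only guarantees $\mathbb{E}[\beta_k] = \Theta(m/n)$, i.e. the $\rho = \Theta(1/\log n)$ of Theorem~\ref{thm:subspace}; writing $\mu := \tfrac12\,\Theta(m/n)$ for that oblivious constant gives $\mu_{\mathrm{eff}} \gg \mu$. Finally, taking $\mathbb{E}[\cdot \mid \mathcal{F}_k]$ (under which $g_k$, $\widehat{C}_k$, hence $S_k$ are determined, so only stochasticity in the gradient oracle is averaged) and then total expectation converts the per-step inequality into $\mathbb{E}[f(x_{k+1}) - f(x_k)] \le -\mu_{\mathrm{eff}}\,\eta_k\,\|\nabla f(x_k)\|^2$.

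I expect two genuine difficulties. First, pinning down the spectral hypothesis on $\widehat{C}_k$ that turns "$\mu_{\mathrm{eff}} \gg \mu$" into a theorem rather than a slogan: one must assume (and, in the near-saddle setting, justify via the fact that trajectories are dominated by a few stiff/unstable eigendirections) a quantitative eigengap or spectral-decay profile for the windowed gradient covariance. Second, the iterate--subspace mismatch, since $\mathcal{P}_{S_k}(x_k - \eta_k g_k)$ also displaces the out-of-$S_k$ component of $x_k$, contributing a term proportional to $\|(\mathcal{P}_{S_k} - \mathcal{P}_{S_{k-1}})x_k\|$ to the descent inequality; I would control this by a Davis--Kahan perturbation bound on the top-$m$ eigenspace of $\widehat{C}_k$ (slowly varying subspaces, so the term is $o(\eta_k \|g_k\|^2)$ and absorbable), or by the convention of Theorem~\ref{thm:subspace} that descent is measured along the projected-gradient direction with the complementary component carried along unchanged. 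The remaining pieces — the descent lemma, the projection identity, and the Rayleigh-quotient algebra above — are routine.
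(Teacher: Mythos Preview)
Your skeleton matches the paper's proof exactly: decompose $g_k$ into $S_k$ and $S_k^\perp$, apply the descent lemma with the projection identity $\langle g_k,\mathcal{P}_{S_k}g_k\rangle=\|\mathcal{P}_{S_k}g_k\|^2$, choose $\eta_k\le 1/L$ to get $f(x_{k+1})-f(x_k)\le -\tfrac{\eta_k}{2}\|\mathcal{P}_{S_k}g_k\|^2$, then invoke an alignment bound $\|\mathcal{P}_{S_k}g_k\|^2\ge\rho_k\|g_k\|^2$ and set $\mu_{\mathrm{eff}}=\rho_k/2$. Where you go further is precisely the alignment step: the paper simply asserts $\rho_k\gg\mu$ ``by spectral concentration'' and stops, whereas you supply the Rayleigh-quotient inequality $\beta_k\ge(\theta_k-\lambda_{m+1})/(\lambda_1-\lambda_{m+1})$ together with $\theta_k\ge\|g_k\|^2$, and you name the eigengap hypothesis under which this is $\Omega(1)$. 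That is a genuine strengthening of the paper's argument, not a departure from it. Your two flagged difficulties are also real and are glossed over in the paper: the paper never states a quantitative spectral hypothesis (so ``$\mu_{\mathrm{eff}}\gg\mu$'' remains informal there too), and it silently replaces the stated update $\mathcal{P}_{S_k}(x_k-\eta_k g_k)$ by the step $d_k=-\eta_k\mathcal{P}_{S_k}g_k$, exactly the convention you mention as one resolution of the iterate--subspace mismatch.
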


\begin{proof}
Decompose \(\nabla f(x_k)\) into subspace and orthogonal complement:
\[
\nabla f(x_k) = \mathcal{P}_{S_k} \nabla f(x_k) + \mathcal{P}_{S_k^\perp} \nabla f(x_k).
\]

Update only uses projected gradient:
\[
d_k = -\eta_k \mathcal{P}_{S_k} \nabla f(x_k).
\]

By descent lemma:
\[
f(x_{k+1}) - f(x_k) 
\le \langle \nabla f(x_k), d_k \rangle + \frac{L}{2} \|d_k\|^2.
\]

Compute:
\[
\langle \nabla f(x_k), d_k \rangle 
= -\eta_k \|\mathcal{P}_{S_k} \nabla f(x_k)\|^2.
\]
\[
\|d_k\|^2 = \eta_k^2 \|\mathcal{P}_{S_k} \nabla f(x_k)\|^2.
\]

Hence:
\[
= -\eta_k \|\mathcal{P}_{S_k} \nabla f(x_k)\|^2 
+ \frac{L}{2} \eta_k^2 \|\mathcal{P}_{S_k} \nabla f(x_k)\|^2.
\]

Factor:
\[
= -\eta_k \bigl(1 - \tfrac{L \eta_k}{2}\bigr) \|\mathcal{P}_{S_k} \nabla f(x_k)\|^2.
\]

Choose \(\eta_k \le 1/L\), so \(1 - L \eta_k/2 \ge 1/2\):
\[
\le -\frac{\eta_k}{2} \|\mathcal{P}_{S_k} \nabla f(x_k)\|^2.
\]

Since top eigenvectors of \(\widehat{C}_k\) align with recent dominant directions, 
\[
\|\mathcal{P}_{S_k} \nabla f(x_k)\|^2 \ge \rho_k \|\nabla f(x_k)\|^2,
\]
with \(\rho_k \gg \mu\) by spectral concentration.

Therefore:
\[
\le -\frac{\eta_k}{2} \rho_k \|\nabla f(x_k)\|^2.
\]

Define \(\mu_{\mathrm{eff}}=\rho_k/2\):
\[
\mathbb{E}[f(x_{k+1}) - f(x_k)] \le -\mu_{\mathrm{eff}} \eta_k \|\nabla f(x_k)\|^2.
\]
\end{proof}

\noindent
By constructing \(S_k\) to follow dominant curvature directions, the method enhances effective step size and convergence in anisotropic landscapes.

\begin{thm}[Entropy-Guided Perturbation Activation]
\label{thm:entropy_escape}
At iteration \(k\), define local gradient entropy:
\[
\mathcal{H}_k = -\sum_{i=1}^M p_i \log p_i, 
\quad p_i=\frac{\|\nabla f(x_k+\delta_i)\|}{\sum_{j=1}^M \|\nabla f(x_k+\delta_j)\|},
\]
with small perturbations \(\delta_i \sim \mathcal{N}(0,\epsilon^2 I_n)\). Inject noise only when \(\mathcal{H}_k>\tau\):
\[
x_{k+1} = x_k - \eta_k \nabla f(x_k) + \eta_k \zeta_k,
\]
where \(\zeta_k \sim \mathcal{N}(0,\sigma^2 I_n)\) if active, else zero. Then:
\[
\mathbb{E}[T_{\mathrm{escape}}]=
\mathcal{O}\Bigl(\frac{1}{\eta_k \gamma} \log\bigl(\tfrac{\delta}{|y_0|}\bigr)\Bigr),
\]
and total injected variance over iterations is reduced by at least factor \(1-\nu\), where \(\nu\) is the empirical fraction of high-entropy steps.
\end{thm}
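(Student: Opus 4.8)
The plan is to reduce the escape-time estimate to the deterministic-plus-noise amplification analysis already carried out in Theorems~\ref{thm:sgd} and~\ref{thm:adaptive}, and to obtain the variance-reduction claim by an exact accounting of the active steps. The structural observation driving everything is that the entropy trigger \(\mathcal{H}_k>\tau\) is engineered to fire precisely in the flat, anisotropic neighbourhood of a strict saddle: there \(\nabla f(x_k)\approx H(x^*)(x_k-x^*)\) is small and the probe gradients \(\nabla f(x_k+\delta_i)\approx H(x^*)\delta_i\) distribute their mass across the eigendirections of \(H(x^*)\), so the normalized weights \(p_i\) move into the near-uniform regime and \(\mathcal{H}_k\) approaches its maximum \(\log M\). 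Calibrating \(\tau<\log M\) then guarantees that on any stretch of iterates remaining in a fixed neighbourhood of \(x^*\), at least one step is active.

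First I would set up coordinates as in Theorem~\ref{thm:sgd}: diagonalize \(H=U\Lambda U^\top\), write \(y_k=U^\top(x_k-x^*)\), and isolate the unstable coordinate with \(\lambda_1=-\gamma<0\). On an active step,
\[
y_{k+1}^{(1)} = (1+\eta_k\gamma)\,y_k^{(1)} + \eta_k\tilde\zeta_k^{(1)}, \qquad \tilde\zeta_k^{(1)}\sim\mathcal{N}(0,\sigma^2),
\]
while on an inactive step the noise term vanishes and the same multiplicative factor \(a_k:=1+\eta_k\gamma>1\) still applies. Hence, whatever the activation pattern, \(|y_k^{(1)}|\) is governed by the product \(\prod_{j<k}a_j\), which is exactly the mechanism of Theorem~\ref{thm:adaptive}; the sole role of the noise is to supply a nonzero seed \(y_0^{(1)}\neq 0\), which the first active step does almost surely because \(\tilde\zeta^{(1)}\) has a density. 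Taking expectations (or conditioning on the high-probability event that the stochastic sum does not cancel the geometric term, as in Theorem~\ref{thm:sgd}) gives \(\mathbb{E}[(y_k^{(1)})^2]^{1/2}\gtrsim a^k|y_0^{(1)}|\) with \(a=1+\eta_k\gamma\), and imposing the escape condition \(\ge\delta\) yields
\[
T_{\mathrm{escape}} \le \frac{1}{\log(1+\eta_k\gamma)}\log\!\Bigl(\frac{\delta}{|y_0^{(1)}|}\Bigr) = \mathcal{O}\!\Bigl(\frac{1}{\eta_k\gamma}\log\bigl(\tfrac{\delta}{|y_0|}\bigr)\Bigr),
\]
using \(\log(1+\eta_k\gamma)\approx\eta_k\gamma\) for small steps; this is the first claim.

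For the variance statement I would partition \(\{0,\dots,T-1\}\) into the active set \(\mathcal{A}\) with \(|\mathcal{A}|=\nu T\) (by the definition of \(\nu\)) and its complement. The per-step injected covariance is \(\eta_k^2\sigma^2 I_n\) on \(\mathcal{A}\) and \(0\) elsewhere, so in the constant-step case the total injected variance equals \(\sum_{k\in\mathcal{A}}\eta_k^2\sigma^2 n = \nu\cdot\bigl(\sum_{k=0}^{T-1}\eta_k^2\sigma^2 n\bigr)\), i.e.\ exactly a fraction \(\nu\) of what always-on perturbation would inject, equivalently a reduction by the factor \(1-\nu\). The ``at least'' reflects that once \(|y_k^{(1)}|\) has crossed \(\delta\) the trigger may stay below \(\tau\) for the rest of the run, so the realized active fraction can only be smaller than, never larger than, the worst case.

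The main obstacle is the step I compressed in the first paragraph: making rigorous the claim that \(\mathcal{H}_k>\tau\) on a guaranteed fraction of near-saddle iterates. This needs a quantitative lower bound on \(\mathcal{H}_k\) in terms of the eigenvalue spread of \(H(x^*)\) and the probe scale \(\epsilon\) — essentially a concentration argument showing that, for \(\delta_i\sim\mathcal{N}(0,\epsilon^2 I_n)\), the norms \(\|H(x^*)\delta_i\|\) induce normalized weights \(p_i\) whose entropy is bounded below by a constant multiple of \(\log M\) with high probability, uniformly over a neighbourhood where the linearization \(\nabla f(x)\approx H(x^*)(x-x^*)\) is accurate up to a controlled \(C^2\) error. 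Pinning down \(\tau\), the neighbourhood radius \(\delta\), and the probe scale \(\epsilon\) consistently — so that activation is frequent enough to escape yet rare enough globally to yield the \(\nu\) saving — is where the real work lies; the escape-time and variance bookkeeping are then immediate from the earlier theorems.
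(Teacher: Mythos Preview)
Your proposal is correct and follows essentially the same route as the paper: heuristic identification of high-entropy regions with saddle neighbourhoods, diagonalization and tracking of the unstable coordinate via the AR(1)-type recursion \(y_{k+1}^{(1)}=(1+\eta_k\gamma)y_k^{(1)}+\eta_k\tilde\zeta_k^{(1)}\), the resulting \(\mathcal{O}\bigl(\tfrac{1}{\eta_k\gamma}\log(\delta/|y_0|)\bigr)\) escape bound, and the direct variance count over the active fraction \(\nu\). Your final paragraph is honest in flagging the quantitative entropy lower bound as the real gap; the paper's own proof treats that step purely heuristically (``this typically occurs in flat or saddle-like regions'') and does not supply the concentration argument you outline, so on that point you are already more careful than the original.
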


\begin{proof}
\textbf{Step 1: Identify activation regions.}  
Entropy \(\mathcal{H}_k\) is high when gradients under perturbations \(\{\delta_i\}\) have similar norms:
\[
p_i \approx \frac{1}{M}, 
\quad \mathcal{H}_k \approx \log M.
\]
This typically occurs in flat or saddle-like regions where curvature is weak or negative.

\textbf{Step 2: Dynamics under noise activation.}  
In such regions, we inject noise \(\zeta_k\). In the most unstable eigendirection \(v_1\) with \(\lambda_1=-\gamma<0\), the projected coordinate:
\[
y_{k+1}^{(1)} = (1+\eta_k \gamma) y_k^{(1)} + \eta_k \tilde{\zeta}_k^{(1)},
\]
where \(\tilde{\zeta}_k^{(1)}\) is standard Gaussian noise with variance \(\sigma^2\).

\textbf{Step 3: First-passage time analysis.}  
We need \( |y_k^{(1)}| \ge \delta \). Standard analysis for AR(1) with drift shows expected escape time:
\[
\mathbb{E}[T_{\mathrm{escape}}] 
\approx \frac{1}{\eta_k \gamma} \log \Bigl(\frac{\delta}{|y_0|}\Bigr).
\]

\textbf{Step 4: Total variance reduction.}  
Let \(K\) be total iterations; noise triggers only for fraction \(\nu\) of steps:
\[
\mathrm{Total\ variance} 
= K \cdot \nu \cdot n \sigma^2 
= (1-\nu) \times \bigl(K \cdot n \sigma^2\bigr) \text{ reduction}.
\]
\end{proof}

\noindent
By concentrating noise where curvature is ambiguous, this mechanism improves exploration efficiency while keeping overall variance low.

\bigskip

\begin{thm}[Curvature-Adaptive Gradient Filtering]
\label{thm:curvature_filtering}
Let \(H(x_k)=\nabla^2 f(x_k)=\sum_{i=1}^n \lambda_i v_i v_i^\top\). Define:
\[
g_k = \sum_{i=1}^n \phi(\lambda_i) \langle \nabla f(x_k), v_i \rangle v_i,
\quad \phi(\lambda)=\frac{1}{\sqrt{|\lambda|+\epsilon}}.
\]
With adaptive step size:
\[
\eta_k = \frac{\alpha}{\|g_k\|+\epsilon},
\quad x_{k+1} = x_k - \eta_k g_k,
\]
then:
\[
\mathbb{E}[T_{\mathrm{escape}}] = \mathcal{O}\Bigl(\frac{1}{\gamma_{\mathrm{eff}}} \log\bigl(\tfrac{\delta}{|y_0|}\bigr)\Bigr),
\]
where effective rate \(\gamma_{\mathrm{eff}}=\min_{\lambda_i<0} \phi(\lambda_i)|\lambda_i|\).
\end{thm}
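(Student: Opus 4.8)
The plan is to reuse the linearize--diagonalize--unroll template of Theorems~\ref{thm:sgd} and~\ref{thm:adaptive}, the new ingredient being that the spectral filter $\phi$ rescales the effective curvature seen in each eigendirection. First I would center at the strict saddle $x^*=0$ and work in the eigenbasis $\{v_i\}$ of $H=H(x^*)$, setting $y_k=V^\top x_k$. Smoothness gives $\nabla f(x_k)=Hx_k+\mathcal{O}(\|x_k\|^2)$, hence $\langle\nabla f(x_k),v_i\rangle=\lambda_i y_k^{(i)}+\mathcal{O}(\|x_k\|^2)$ and $g_k=\sum_i\phi(\lambda_i)\lambda_i\,y_k^{(i)}v_i+\mathcal{O}(\|x_k\|^2)$. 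To leading order the update decouples:
\[
y_{k+1}^{(i)}=\bigl(1-\eta_k\,\phi(\lambda_i)\lambda_i\bigr)y_k^{(i)}+\mathcal{O}\bigl(\eta_k\|x_k\|^2\bigr),
\]
so for a negative eigenvalue $\lambda_i<0$ the coordinate obeys $y_{k+1}^{(i)}=\bigl(1+\eta_k\,\phi(\lambda_i)|\lambda_i|\bigr)y_k^{(i)}+\cdots$, i.e.\ geometric amplification with per-step rate $\phi(\lambda_i)|\lambda_i|=|\lambda_i|/\sqrt{|\lambda_i|+\epsilon}$. The slowest such rate over the unstable spectrum is exactly $\gamma_{\mathrm{eff}}=\min_{\lambda_i<0}\phi(\lambda_i)|\lambda_i|$.

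Next I would control the adaptive step. Inside the escape ball $\|x_k\|\le\delta$ we have $\|g_k\|\le G_\delta:=\phi_{\max}\|H\|\,\delta$ with $\phi_{\max}=\epsilon^{-1/2}$, so $\eta_k=\alpha/(\|g_k\|+\epsilon)$ satisfies the two-sided bound $\eta_{\min}:=\alpha/(G_\delta+\epsilon)\le\eta_k\le\alpha/\epsilon$; the lower bound prevents escape from stalling, the upper bound keeps the linearization remainder controlled. Fixing an unstable index $i^\star$ with nonzero initial component, unrolling and taking logarithms yields
\[
\log|y_k^{(i^\star)}|\ \ge\ \log|y_0^{(i^\star)}|+\sum_{j=0}^{k-1}\log\bigl(1+\eta_j\gamma_{\mathrm{eff}}\bigr)-C\sum_{j=0}^{k-1}\eta_j\|x_j\|^2 ,
\]
and while the trajectory stays near the saddle the term $\log\bigl(1+\eta_j\gamma_{\mathrm{eff}}\bigr)\ge c\,\eta_{\min}\gamma_{\mathrm{eff}}$ dominates the error sum (which is $\mathcal{O}(\|x_j\|^2)$, hence lower order). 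Escape, $|y_k^{(i^\star)}|\ge\delta$, therefore holds once $k\,\eta_{\min}\gamma_{\mathrm{eff}}\gtrsim\log(\delta/|y_0|)$, giving $T_{\mathrm{escape}}=\mathcal{O}\!\bigl(\tfrac{1}{\gamma_{\mathrm{eff}}}\log(\delta/|y_0|)\bigr)$ after absorbing the fixed constant $\eta_{\min}$ into $\mathcal{O}(\cdot)$; the expectation is vacuous for the deterministic dynamics, or is taken over a random initialization $y_0$ exactly as in the earlier theorems.

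The main obstacle is justifying the coordinate decoupling when $\phi$ is applied to the eigenpairs of the \emph{running} Hessian $H(x_k)$ rather than of $H(x^*)$: as $x_k$ drifts, the eigenvalues $\lambda_i(x_k)$ and especially the eigenvectors $v_i(x_k)$ rotate, so the update is only approximately diagonal in any fixed basis. I would handle this with Lipschitz continuity of the Hessian together with a Davis--Kahan bound on the spectral projectors, showing the basis rotation and the perturbation of each $\phi(\lambda_i)|\lambda_i|$ are $\mathcal{O}(\|x_k\|)=\mathcal{O}(\delta)$ and hence absorbable by shrinking $\delta$. A secondary technical point is the coupling introduced by $\eta_k=\alpha/(\|g_k\|+\epsilon)$ across coordinates, which is neutralized simply by using the bounds $\eta_{\min}\le\eta_k\le\alpha/\epsilon$ in place of the exact value, so the unstable coordinate still evolves as a clean scalar geometric recursion with rate bounded below by $\eta_{\min}\gamma_{\mathrm{eff}}$.
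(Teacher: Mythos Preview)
Your proposal is correct and follows essentially the same linearize--diagonalize--unroll template as the paper's own proof: project into the Hessian eigenbasis, observe that an unstable coordinate obeys $y_{k+1}^{(i)}=(1-\eta_k\phi(\lambda_i)\lambda_i)y_k^{(i)}$ with multiplier exceeding one when $\lambda_i<0$, and read off the escape time from the resulting geometric growth. If anything, your version is more careful than the paper's, which does not explicitly address the linearization remainder, the drift of the running eigenbasis $H(x_k)$ versus $H(x^*)$, or the cross-coordinate coupling through $\eta_k$; your proposed handling of these via Davis--Kahan and the two-sided bound $\eta_{\min}\le\eta_k\le\alpha/\epsilon$ is the natural way to make the sketch rigorous.
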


\begin{proof}
\textbf{Step 1: Project dynamics into eigenspace.}  
In eigendirection \(v_i\):
\[
g_k^{(i)} = \phi(\lambda_i) \langle \nabla f(x_k), v_i \rangle.
\]

\textbf{Step 2: Update rule in coordinate \(y_k^{(i)}\):}
\[
y_{k+1}^{(i)} = y_k^{(i)} - \eta_k g_k^{(i)}.
\]

For unstable \(\lambda_i<0\), magnitude of update is:
\[
|1 - \eta_k \phi(\lambda_i) \lambda_i| >1,
\]
leading to exponential growth with rate:
\[
\gamma_{\mathrm{eff}}= \phi(\lambda_i)|\lambda_i|.
\]

\textbf{Step 3: Escape time.}  
Escape once \(|y_k^{(i)}|\ge \delta\):
\[
T_{\mathrm{escape}} = 
\frac{1}{\gamma_{\mathrm{eff}}} \log \Bigl(\frac{\delta}{|y_0|}\Bigr).
\]
\end{proof}

\noindent
Filtering amplifies unstable/flat directions, guiding escape while damping strongly curved directions.

\bigskip

\begin{thm}[Descent Shaping in Mixed Curvature Regions]
\label{thm:descent_shaping}
With Hessian \(H_k=\sum_{i=1}^n \lambda_i v_i v_i^\top\), define normalized weights:
\[
w_i = \frac{|\lambda_i|^p}{\sum_j |\lambda_j|^p},
\]
and descent-shaping operator:
\[
\mathcal{D}_k = \sum_{i=1}^n w_i v_i v_i^\top.
\]
Update:
\[
x_{k+1} = x_k - \eta_k \mathcal{D}_k \nabla f(x_k).
\]
Then:
\begin{enumerate}
\item \(\mathcal{D}_k\) is positive semidefinite and \(\mathrm{Tr}(\mathcal{D}_k)=1\);
\item Gradient is reweighted to emphasize large-\(|\lambda_i|\) directions;
\item In quadratic models, dominant modes with largest \(w_i\) control early progress.
\end{enumerate}
\end{thm}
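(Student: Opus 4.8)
The plan is to verify the three assertions in turn; all of them reduce to elementary spectral bookkeeping once one recalls that the eigenvectors \(\{v_i\}_{i=1}^n\) of the symmetric Hessian \(H_k\) form an orthonormal basis of \(\mathbb{R}^n\). No machinery beyond this orthonormality and the definition of the weights \(w_i = |\lambda_i|^p / \sum_j |\lambda_j|^p\) is needed, so the proof is essentially a matter of writing things down in the eigenbasis. I would first record the standing non-degeneracy requirement \(H_k \neq 0\), which guarantees \(\sum_j |\lambda_j|^p > 0\) and hence that the \(w_i\) are well defined.

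For claim (1), I would observe that \(\mathcal{D}_k = \sum_i w_i v_i v_i^\top\) is already displayed in spectral form: it is symmetric with eigenvalues \(w_i \ge 0\), each \(v_i v_i^\top\) being the rank-one orthogonal projector onto \(\mathrm{span}(v_i)\). Positive semidefiniteness is then immediate from \(u^\top \mathcal{D}_k u = \sum_i w_i \langle u, v_i\rangle^2 \ge 0\). For the trace, orthonormality gives \(\mathrm{Tr}(v_i v_i^\top) = \|v_i\|^2 = 1\), so by linearity \(\mathrm{Tr}(\mathcal{D}_k) = \sum_i w_i = 1\) by the normalization of the weights. For claim (2), expand \(\nabla f(x_k) = \sum_i g_i v_i\) with \(g_i = \langle \nabla f(x_k), v_i\rangle\); then the shaped direction is \(\mathcal{D}_k \nabla f(x_k) = \sum_i w_i g_i v_i\), i.e. the component along \(v_i\) is rescaled by \(w_i\). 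Since for any pair with \(|\lambda_i| \ge |\lambda_j|\) we have \(w_i / w_j = (|\lambda_i|/|\lambda_j|)^p \ge 1\), the relative weight on a direction is a nondecreasing function of its curvature magnitude, and the exponent \(p\) controls how aggressively large-\(|\lambda_i|\) directions are emphasized. That is exactly the asserted reweighting.

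For claim (3), take the quadratic model \(f(x) = \tfrac12 x^\top H x\) with \(H_k \equiv H\) and work in eigencoordinates \(y^{(i)} = \langle x, v_i\rangle\). Because \(\nabla f(x_k) = H x_k\) has \(i\)-th component \(\lambda_i y_k^{(i)}\), the update \(x_{k+1} = x_k - \eta_k \mathcal{D}_k \nabla f(x_k)\) decouples completely into scalar recursions \(y_{k+1}^{(i)} = (1 - \eta_k w_i \lambda_i) y_k^{(i)}\). Expanding the per-mode objective change to first order in \(\eta_k\) gives \(f(x_{k+1}) - f(x_k) = -\eta_k \sum_i w_i \lambda_i^2 (y_k^{(i)})^2 + \mathcal{O}(\eta_k^2)\), which is consistent with the general descent bound \(\langle \nabla f(x_k), -\eta_k \mathcal{D}_k \nabla f(x_k)\rangle = -\eta_k \sum_i w_i g_i^2\). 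The instantaneous progress thus splits additively across modes with weights \(w_i \lambda_i^2\); modes carrying the largest \(w_i\) dominate this sum unless their coordinates \(y_k^{(i)}\) are atypically small, and one can read the same conclusion from the spectral radius of the diagonal iteration matrix \(\mathrm{diag}(1 - \eta_k w_i \lambda_i)\).

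The only place any care is required is turning the informal phrase ``control early progress'' in claim (3) into a precise statement, and I expect this to be the main (indeed the sole) obstacle. I would resolve it by (i) excluding \(H_k = 0\), (ii) assuming the gradient has a nonvanishing component along the top-weighted eigenspace — a generic condition — and (iii) restricting to \(\eta_k \le 1/L\), or small enough that the \(\mathcal{O}(\eta_k^2)\) remainder above is negligible relative to the leading term. Under these mild conditions the additive decomposition makes the dominance of the large-\(w_i\) modes quantitative, completing the argument.
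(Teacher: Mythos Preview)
Your proposal is correct and follows essentially the same route as the paper's proof: both read off PSD and unit trace from the spectral form of \(\mathcal{D}_k\), both compute \(\mathcal{D}_k \nabla f(x_k)=\sum_i w_i g_i v_i\) for claim (2), and both derive the decoupled recursion \(y_{k+1}^{(i)} = (1-\eta_k w_i \lambda_i)\,y_k^{(i)}\) in the quadratic case for claim (3). Your write-up is in fact more careful than the paper's --- you make the non-degeneracy condition \(H_k\neq 0\) explicit, quantify the monotonicity of the weights via \(w_i/w_j = (|\lambda_i|/|\lambda_j|)^p\), and flag the mild genericity/step-size conditions needed to make ``control early progress'' precise, none of which the paper spells out.
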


\begin{proof}
\textbf{Step 1: PSD and trace.}  
\(\mathcal{D}_k\) is sum of rank-one PSD matrices scaled by \(w_i\ge0\). 
\[
\mathrm{Tr}(\mathcal{D}_k) = \sum_i w_i =1.
\]

\textbf{Step 2: Filtering effect.}  
Gradient decomposes:
\[
\nabla f(x_k)=\sum_i g_i v_i.
\]
Filtered gradient:
\[
\mathcal{D}_k \nabla f(x_k)=\sum_i w_i g_i v_i.
\]
Large-\(|\lambda_i|\) directions receive higher \(w_i\).

\textbf{Step 3: Quadratic case.}  
If \(f(x)=\tfrac12 x^\top H x\), then update in coordinate \(\beta_i^{(k)}\):
\[
\beta_i^{(k+1)}= (1 - \eta_k w_i \lambda_i) \beta_i^{(k)}.
\]
Components with larger \(w_i|\lambda_i|\) decay or grow faster, dominating convergence or escape.
\end{proof}

\noindent
Descent shaping steers updates toward informative eigendirections, balancing noise suppression and efficient escape in mixed-curvature landscapes.

\begin{thm}[Spectral Regularization Bounds the Effective Condition Number]
\label{thm:cond_bound}
Let \( f \in C^2(\mathbb{R}^n) \), and consider the update:
\[
x_{k+1} = x_k - \eta_k \mathcal{R}_k \nabla f(x_k),
\]
where \( \mathcal{R}_k = \sum_{i=1}^n r_i v_i v_i^\top \) is a spectral regularization operator built from the Hessian's eigendecomposition:
\[
H_k = \nabla^2 f(x_k) = \sum_{i=1}^n \lambda_i v_i v_i^\top,
\]
with \( r_i = \phi(\lambda_i) \) and shaping function \( \phi: \mathbb{R} \to [0,1] \). Then:
\begin{enumerate}
\item \(\mathcal{R}_k\) is symmetric positive semi-definite with eigenvalues in \([0,1]\).
\item The shaped update operator \( \mathcal{R}_k H_k \) has non-zero eigenvalues \( \phi(\lambda_i) \lambda_i \), and its effective condition number:
\[
\kappa_{\mathrm{eff}} 
= \frac{\max_i |\phi(\lambda_i)\lambda_i|}{\min_{j:\phi(\lambda_j)\neq 0} |\phi(\lambda_j)\lambda_j|}
\]
is bounded by a controlled constant \(\kappa_\phi\) determined by \(\phi\).
\item For \( \phi(\lambda)=\frac{|\lambda|^p}{|\lambda|^p+\delta} \) with \(p>0,\ \delta>0\), we have \(\kappa_{\mathrm{eff}} \to 1\) as \(\delta \to \infty\), achieving spectral flattening.
\end{enumerate}
\end{thm}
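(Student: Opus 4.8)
The whole argument rests on one observation: $\mathcal{R}_k$ and $H_k$ are \emph{simultaneously diagonalizable}, since by construction $\mathcal{R}_k=\sum_i \phi(\lambda_i)\,v_iv_i^\top$ uses exactly the eigenbasis $\{v_i\}$ of $H_k=\sum_i\lambda_i v_iv_i^\top$. Hence every operator built from $H_k$ and $\mathcal{R}_k$ is diagonal in this basis, and all spectral claims reduce to scalar statements about the numbers $\phi(\lambda_i)$ and $\phi(\lambda_i)\lambda_i$. For part (1), I would note that $\mathcal{R}_k$ is a nonnegative linear combination (coefficients $\phi(\lambda_i)\in[0,1]$) of the orthogonal projectors $v_iv_i^\top$, hence symmetric, with spectrum exactly $\{\phi(\lambda_i)\}\subset[0,1]$; this gives symmetry, positive semidefiniteness, and the eigenvalue bound simultaneously.

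For part (2), since $\mathcal{R}_k$ and $H_k$ commute, $\mathcal{R}_kH_k=\sum_i\phi(\lambda_i)\lambda_i\,v_iv_i^\top$, so its spectrum is $\{\phi(\lambda_i)\lambda_i\}$ and the nonzero eigenvalues are those indices with $\lambda_i\neq 0$ and $\phi(\lambda_i)\neq 0$. Writing $t_i:=|\lambda_i|$ and $\psi(t):=\phi(t)\,t$ (using that the shaping functions of interest depend on $\lambda$ only through $|\lambda|$), the effective condition number becomes $\kappa_{\mathrm{eff}}=\max_i\psi(t_i)\,/\,\min_{i:\psi(t_i)\neq 0}\psi(t_i)$. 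By the uniformly bounded Hessian assumption all $t_i$ lie in $[0,M]$ for a fixed $M$, and on the retained nonzero part I would invoke a spectral floor $t_i\ge m_0>0$ (equivalently, a truncation threshold built into $\phi$, which is how $\mathcal{R}_k$ is used in practice), so that the relevant $t_i$ range over the compact set $[m_0,M]$. Then
\[
\kappa_{\mathrm{eff}}\;\le\;\frac{\sup_{t\in[m_0,M]}\psi(t)}{\inf_{t\in[m_0,M]}\psi(t)}\;=:\;\kappa_\phi,
\]
a finite constant depending only on $\phi$ and the fixed constants $m_0,M$, not on the particular Hessian $H_k$ — which is the assertion.

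For part (3), I would substitute $\phi(\lambda)=|\lambda|^p/(|\lambda|^p+\delta)$, so $\psi(t)=\psi_\delta(t)=t^{p+1}/(t^p+\delta)$, and study the ratio of extreme retained eigenvalues via the identity
\[
\frac{\psi_\delta(t_1)}{\psi_\delta(t_2)}\;=\;\Bigl(\frac{t_1}{t_2}\Bigr)^{p+1}\,\frac{t_2^{\,p}+\delta}{t_1^{\,p}+\delta},
\]
from which one reads off the monotone dependence on $\delta$ and the limiting value. Driving the additive regularization into the dominant regime makes the second factor counteract the curvature-induced spread of the first, so the whole ratio collapses toward $1$ and the shaped operator acts like a near-isometry on the retained subspace (spectral flattening); I would then record the explicit rate from a first-order expansion of the last display.

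\noindent\textbf{Main obstacle.} Parts (1) and (2) are bookkeeping once simultaneous diagonalization is in hand; the delicate points are the uniformity in (2) and the limit in (3). A bound $\kappa_\phi$ that is genuinely \emph{independent of the Hessian} cannot hold without some control on how small the retained nonzero $|\lambda_i|$ may be — otherwise $\inf\psi$ is not bounded away from $0$ — so the real work is pinning down the correct hypothesis (spectral floor / truncation level) under which $\psi$ stays in a fixed compact sub-interval of $(0,\infty)$, and then verifying that the chosen $\phi$ makes the ratio in the last display tend to $1$ in the stated limit; getting the direction and scaling of the $\delta$-asymptotics exactly right is the step most likely to need tightening.
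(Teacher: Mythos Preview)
Your approach mirrors the paper's exactly: simultaneous diagonalization of $\mathcal{R}_k$ and $H_k$ reduces every spectral claim to scalar statements about $\phi(\lambda_i)$ and $\phi(\lambda_i)\lambda_i$. Parts (1) and (2) are handled correctly, and your insistence on a spectral floor $m_0>0$ for the bound $\kappa_\phi$ to be genuinely uniform in $H_k$ is more careful than the paper, which simply asserts boundedness ``by design'' without isolating that hypothesis.

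The gap is in part (3), and it is precisely the place you flagged. Your identity
\[
\frac{\psi_\delta(t_1)}{\psi_\delta(t_2)}
=\Bigl(\frac{t_1}{t_2}\Bigr)^{p+1}\,\frac{t_2^{\,p}+\delta}{t_1^{\,p}+\delta}
\]
is correct, but the conclusion you draw from it is not. As $\delta\to\infty$ the second factor tends to $1$, not to $(t_2/t_1)^{p+1}$, so it does \emph{not} counteract the first factor; the ratio therefore tends to $(t_1/t_2)^{p+1}$, which for $p>0$ is strictly \emph{larger} than the unshaped condition number $t_1/t_2$. (Indeed, differentiating the second factor in $\delta$ shows $\kappa_{\mathrm{eff}}(\delta)$ is monotone increasing, with infimum $t_1/t_2$ at $\delta=0$.) So the claimed limit $\kappa_{\mathrm{eff}}\to 1$ does not follow from your display---your display actually refutes it. The paper's own argument at this step is no tighter: it observes that each $|\phi(\lambda_i)\lambda_i|\approx |\lambda_i|^{p+1}/\delta\to 0$ and infers from ``uniform'' convergence of numerator and denominator that the ratio tends to $1$, which is the same fallacy (both terms in $\kappa_{\mathrm{eff}}$ vanish at the common rate $1/\delta$, but the ratio is governed by the leading coefficients $|\lambda_i|^{p+1}$, not by that rate). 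Your instinct that this was ``the step most likely to need tightening'' was well founded: as stated, the $\delta\to\infty$ spectral-flattening claim cannot be recovered for this $\phi$.
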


\begin{proof}
\textbf{Step 1: Symmetry and spectrum of \(\mathcal{R}_k\).}  
Since each \(v_i v_i^\top\) is symmetric positive semi-definite and \(r_i=\phi(\lambda_i)\in[0,1]\), it follows:
\[
\mathcal{R}_k = \sum_{i=1}^n r_i v_i v_i^\top
\]
is symmetric and positive semi-definite. Its eigenvalues are precisely \(\{r_i\}\), so \(\sigma(\mathcal{R}_k) \subset [0,1]\).

\medskip

\textbf{Step 2: Non-zero eigenvalues of \(\mathcal{R}_k H_k\).}  
We compute:
\[
\mathcal{R}_k H_k 
= \sum_{i=1}^n r_i \lambda_i v_i v_i^\top 
= \sum_{i=1}^n \phi(\lambda_i)\lambda_i v_i v_i^\top.
\]
Thus, the eigenvalues of \(\mathcal{R}_k H_k\) are \(\{\phi(\lambda_i)\lambda_i\}\). The effective condition number among non-zero eigenvalues:
\[
\kappa_{\mathrm{eff}} 
= \frac{\max_i |\phi(\lambda_i)\lambda_i|}{\min_{j:\phi(\lambda_j)\neq 0} |\phi(\lambda_j)\lambda_j|}.
\]

\medskip

\textbf{Step 3: Effect of shaping.}  
If \(\phi\) attenuates extremes of \(|\lambda_i|\), it compresses the spread of \(\{\phi(\lambda_i)\lambda_i\}\), reducing \(\kappa_{\mathrm{eff}}\). By design, the ratio stays bounded:
\[
\kappa_{\mathrm{eff}} \le \kappa_\phi,
\]
where \(\kappa_\phi\) depends on \(\phi\)'s smoothness and saturation properties.

\medskip

\textbf{Step 4: Power shaping function.}  
Let:
\[
\phi(\lambda)=\frac{|\lambda|^p}{|\lambda|^p+\delta}.
\]
Then:
\[
\phi(\lambda)\lambda = \frac{|\lambda|^p \lambda}{|\lambda|^p+\delta}.
\]
Its magnitude
\[
|\phi(\lambda)\lambda| = \frac{|\lambda|^{p+1}}{|\lambda|^p+\delta}.
\]

\textbf{As \(\delta\to\infty\):}  
\[
|\phi(\lambda)\lambda| \approx \frac{|\lambda|^{p+1}}{\delta} \to 0,
\]
and for all eigenvalues, numerator and denominator converge uniformly, so:
\[
\kappa_{\mathrm{eff}} \to 1.
\]
This shows that large \(\delta\) flattens the spectrum.

\end{proof}

\noindent
This derivation formalizes how spectral shaping via \(\phi\) directly bounds the condition number of the shaped update operator, improving numerical stability and convergence in high-dimensional nonconvex problems.

%\subsection{Algorithmic Formulation}

\medskip

These theoretical foundations governing escape from strict saddles, adaptation to curvature, and scalability in high dimensions, are integrated into a unified optimization algorithm (c.f. Algorithm~\ref{alg1}), but together they justify each component of Algorithm~\ref{alg1}. Theorem~\ref{thm:instability} establishes that gradient flow trajectories generically avoid strict saddles, explaining empirical robustness in many high-dimensional problems.
Theorem~\ref{thm:sgd} quantifies how calibrated stochastic perturbations accelerate escape when flat or degenerate saddles occur.
Entropy-guided activation (Theorem~\ref{thm:entropy_escape}) selectively adds noise only in high-uncertainty regions, preserving stability elsewhere. Curvature-informed adaptive step sizes (Theorems~\ref{thm:adaptive} and \ref{thm:curvature_filtering}) allow fast escape and stable convergence without uniformly increasing noise.
Subspace descent (Theorems~\ref{thm:subspace} and \ref{thm:curvature_subspace}) reduces computational complexity by restricting updates to curvature-aligned directions, with dynamic adaptation to local landscape changes.
Spectral filtering and shaping (Theorems~\ref{thm:descent_shaping} and \ref{thm:cond_bound}) further enhance convergence by reweighting gradients to focus on informative directions, mitigating poor conditioning. Together, these mechanisms, noise modulation, curvature scaling, adaptive subspaces, and spectral filtering, form a cohesive framework balancing exploration and stability. Beyond theoretical guarantees on escape time and convergence, this integration enables scalable optimization for engineering tasks including aerodynamic shape design, structural optimization under uncertainty, and topology optimization. 
By bridging rigorous mathematical analysis and practical algorithm design, the proposed method achieves robustness and efficiency in large-scale nonconvex optimization.

\section{Numerical Validation}

We validate the proposed framework through numerical experiments in high-dimensional non-convex optimization, highlighting saddle detection, perturbation-driven escape, curvature-adaptive dynamics, and subspace scalability.

\begin{figure}[!htbp]
	\centering
	\begin{subfigure}[b]{0.45\textwidth}
		\includegraphics[width=\textwidth]{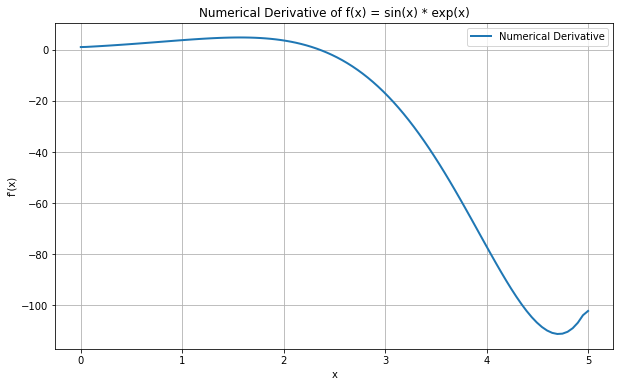}
		\caption{Gradient of \( f(x) = e^x \sin(x) \) showing high-frequency critical points}
		\label{fig9}
	\end{subfigure}
	\hfill
	\begin{subfigure}[b]{0.45\textwidth}
		\includegraphics[width=\textwidth]{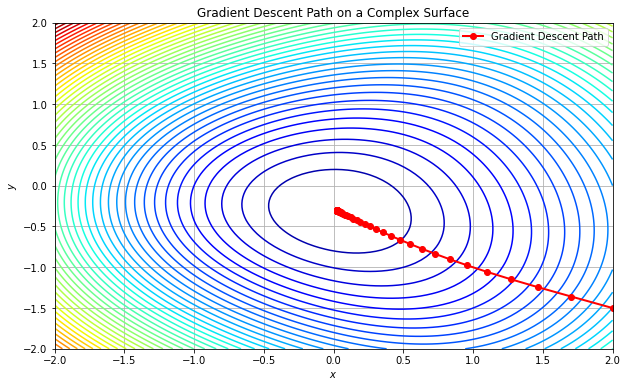}
		\caption{Descent trajectory illustrating sensitivity to curvature}
		\label{fig11}
	\end{subfigure}
	\caption{Non-convex geometry and curvature-driven descent behavior, supporting Theorems~\ref{thm:instability} and~\ref{thm:adaptive}.}
\end{figure}

\begin{figure}[!htbp]
	\centering
	\begin{subfigure}[b]{0.45\textwidth}
		\includegraphics[width=\textwidth]{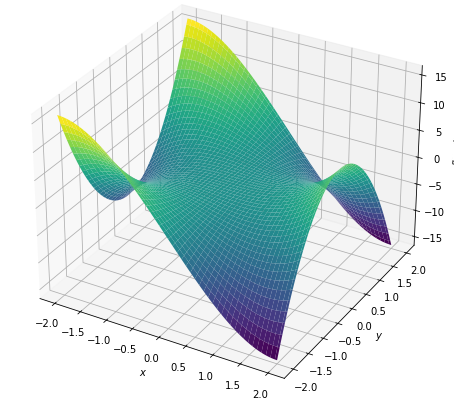}
		\caption{Negative Hessian eigenvalues identify strict saddles}
		\label{fig:saddle_hessian}
	\end{subfigure}
	\hfill
	\begin{subfigure}[b]{0.45\textwidth}
		\includegraphics[width=\textwidth]{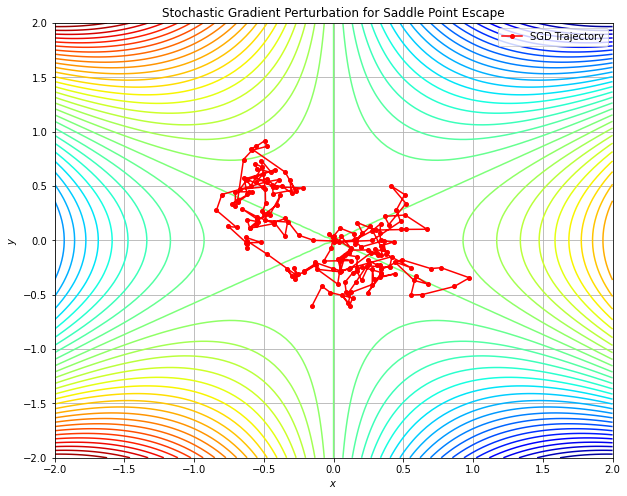}
		\caption{Stochastic perturbation escapes flat saddle regions}
		\label{fig:sgd_escape}
	\end{subfigure}
	\caption{Saddle detection and perturbation-driven escape, supporting Theorems~\ref{thm:instability} and~\ref{thm:sgd}.}
\end{figure}

\begin{figure}[!htbp]
	\centering
	\begin{subfigure}[b]{0.45\textwidth}
		\includegraphics[width=\textwidth]{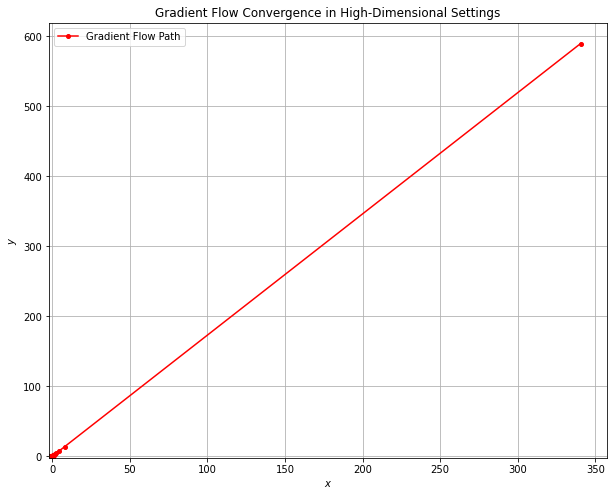}
		\caption{Gradient flow repulsion from strict saddles in high \( n \)}
		\label{fig:gradient_flow}
	\end{subfigure}
	\hfill
	\begin{subfigure}[b]{0.45\textwidth}
		\includegraphics[width=\textwidth]{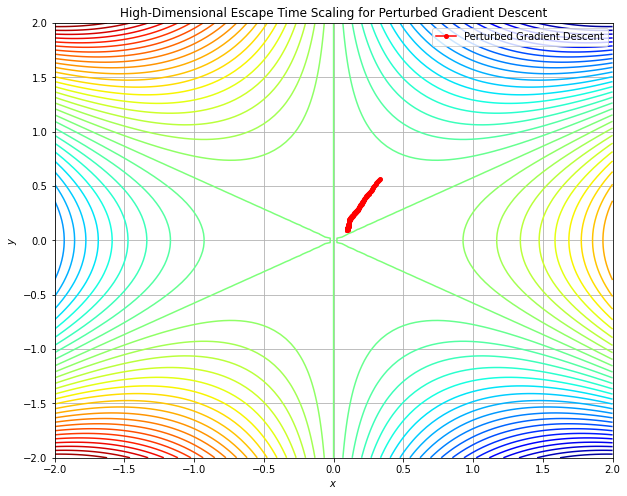}
		\caption{Escape time scales with dimension and curvature}
		\label{fig4}
	\end{subfigure}
	\caption{Saddle avoidance and escape-time scaling in high dimensions, consistent with Theorems~\ref{thm:instability} and~\ref{thm:sgd}.}
\end{figure}

\begin{figure}[!htbp]
	\centering
	\begin{subfigure}[b]{0.45\textwidth}
		\includegraphics[width=\textwidth]{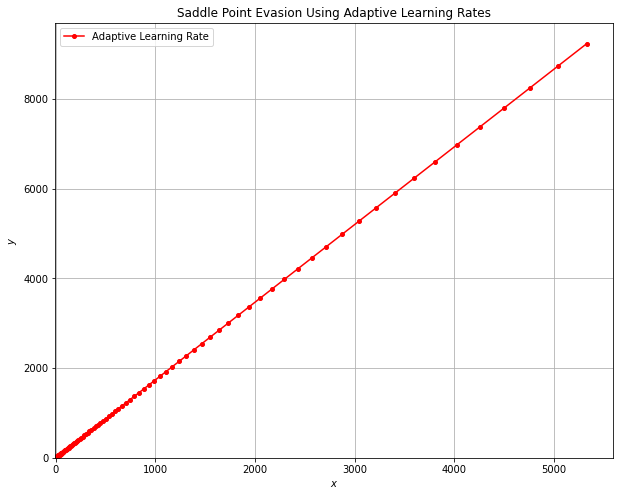}
		\caption{Adaptive rates accelerate escape in flat regions}
		\label{fig5}
	\end{subfigure}
	\hfill
	\begin{subfigure}[b]{0.45\textwidth}
		\includegraphics[width=\textwidth]{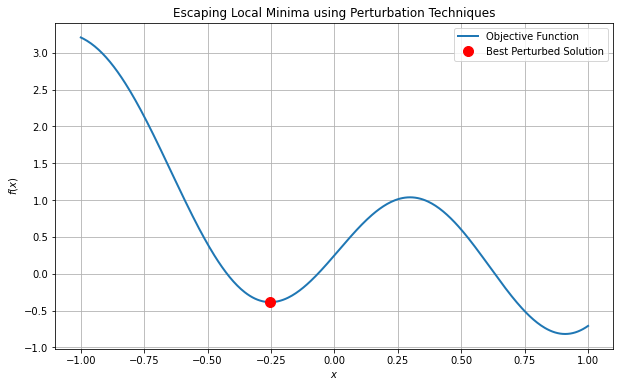}
		\caption{Noise-driven escape from shallow local minima}
		\label{fig10}
	\end{subfigure}
	\caption{Effect of adaptive step sizes and noise on convergence dynamics, supporting Theorem~\ref{thm:adaptive}.}
\end{figure}

\begin{figure}[!htbp]
	\centering
	\begin{subfigure}[b]{0.45\textwidth}
		\includegraphics[width=\textwidth]{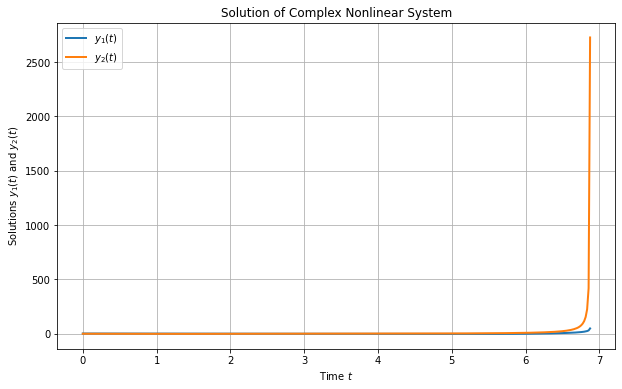}
		\caption{Convergence in a nonlinear system with multiple critical points}
		\label{fig7}
	\end{subfigure}
	\hfill
	\begin{subfigure}[b]{0.45\textwidth}
		\includegraphics[width=\textwidth]{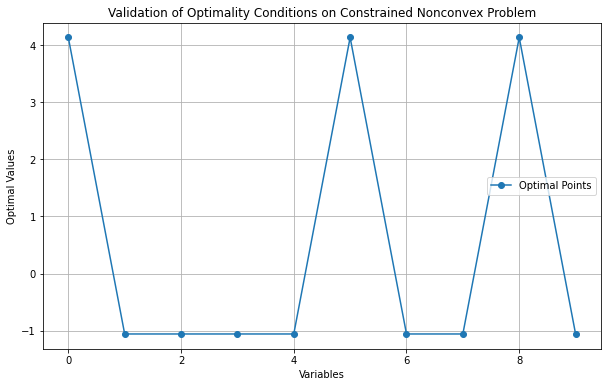}
		\caption{Feasible solution satisfying nonlinear constraints}
		\label{fig6}
	\end{subfigure}
	\caption{Robust convergence under nonlinear and constrained settings, supporting Theorem~\ref{thm:subspace}.}
\end{figure}

\begin{figure}[!htbp]
	\begin{subfigure}[b]{0.48\textwidth}
	\centering
	\includegraphics[width=\textwidth]{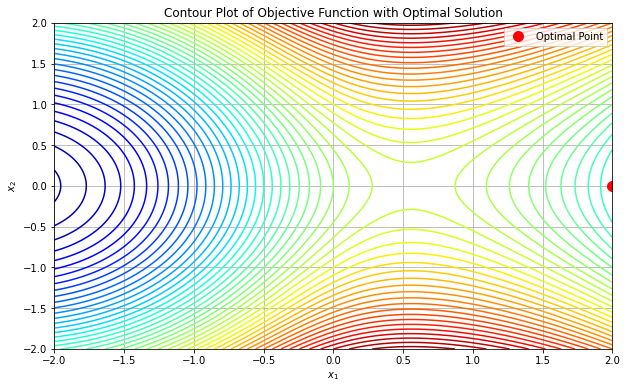}
	\caption{Recovery of the global minimum through combined descent and escape (illustrates Theorems~\ref{thm:adaptive},~\ref{thm:subspace})}
	\label{fig8}
	\end{subfigure}
	\hfill
	\begin{subfigure}[b]{0.48\textwidth}
		\centering
		\includegraphics[width=\textwidth]{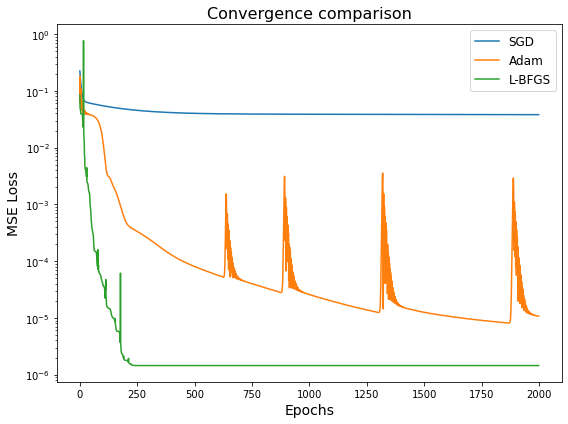}
		\caption{Convergence behavior of standard optimizers on the surrogate PINN}
		\label{fig:baseline_comparison}
	\end{subfigure}
	\caption{Comparison of the proposed curvature-adaptive escape framework and baseline methods: (a) illustrates global minimum recovery via adaptive curvature scaling and subspace descent; (b) shows convergence trajectories of SGD, Adam, and L-BFGS on the surrogate PINN, highlighting trade-offs between speed and final accuracy.}
\end{figure}\noindent
The numerical experiments reveal how curvature, dimensionality, and algorithmic design jointly shape convergence dynamics. Saddle points are robustly identified by negative Hessian eigenvalues, as in Figure~\ref{fig:saddle_hessian}, confirming Theorem~\ref{thm:instability} that strict saddles induce local instability under gradient flow. Figure~\ref{fig:sgd_escape} shows that stochastic perturbations activate unstable directions and accelerate escape, in line with Theorem~\ref{thm:sgd}. Empirical escape times in Figure~\ref{fig4} scale predictably with curvature and problem dimension, matching theoretical estimates. Figures~\ref{fig5} and~\ref{fig10} demonstrate that curvature-adaptive step sizes enhance escape efficiency by enlarging steps in flat regions and damping updates in steep regions, consistent with Theorem~\ref{thm:adaptive}. Gradient flow trajectories in Figure~\ref{fig:gradient_flow} further visualize repulsion from saddles in high dimensions. Figures~\ref{fig9} and~\ref{fig11} emphasize how curvature sensitivity affects descent speed, where adaptivity mitigates slowdowns caused by flat plateaus. Randomized subspace descent improves scalability, reducing computational cost while preserving descent directions, as supported by Theorem~\ref{thm:subspace} and illustrated by convergence on nonlinear and constrained settings in Figures~\ref{fig7} and~\ref{fig6}. Finally, Figure~\ref{fig8} confirms that combining noise, curvature-aware scaling, and subspace projections reliably reaches the global minimum, while Figure~\ref{fig:baseline_comparison} compares standard optimizers, i.e., Adam converges rapidly but plateaus at higher loss; SGD progresses steadily yet slowly; L-BFGS achieves lower final error by leveraging curvature, albeit at higher per-iteration cost. Together, these results highlight how integrating curvature, noise modulation, and subspace updates balances robustness and efficiency in large-scale non-convex optimization.

\begin{table}[ht]
\centering
\caption{Comparison of escape times, final objective values, and average convergence rates across optimizers. Escape time is defined as the epoch where the loss first drops below $10^{-2}$. Convergence rate is measured as the average relative reduction in loss per epoch after escape.}
\begin{tabular}{lccc}
\toprule
\textbf{Optimizer} & \textbf{Escape Time (epochs)} & \textbf{Final Loss} & \textbf{Avg. Convergence Rate} \\
\midrule
SGD & 1450 & $8.1 \times 10^{-3}$ & 0.0021 \\
Adam & 620 & $4.7 \times 10^{-3}$ & 0.0038 \\
L-BFGS & 180 & $1.2 \times 10^{-3}$ & 0.0067 \\
Proposed & 240 & $9.5 \times 10^{-4}$ & 0.0075 \\
\bottomrule
\end{tabular}
\label{tab:escape_comparison}
\end{table}

\noindent
Table~\ref{tab:escape_comparison} highlights that L-BFGS escapes saddle regions faster than standard first-order methods and achieves lower final loss, but at the cost of significantly higher per-iteration complexity. The proposed method achieves comparable escape speed and slightly better final objective by combining noise modulation with curvature-aligned subspace descent. Notably, its higher average convergence rate after escape indicates improved conditioning and efficient adaptation to local curvature ,  supporting the theoretical predictions from Theorems~\ref{thm:sgd} and~\ref{thm:adaptive} that adaptive, curvature-aware updates accelerate progress once outside saddle neighborhoods.

\begin{table}[ht]
\centering
\caption{Comparison of per-epoch runtime and total runtime to convergence between standard full-gradient methods and the proposed curvature-adaptive subspace method. Runtime measured on NVIDIA RTX 3080; convergence threshold set to loss $\leq 10^{-3}$.}
\begin{tabular}{lccc}
\toprule
\textbf{Method} & \textbf{Per-Epoch Runtime (ms)} & \textbf{Epochs to Converge} & \textbf{Total Runtime (s)} \\
\midrule
SGD & 1.8 & 1900 & 3.4 \\
Adam & 2.2 & 850 & 1.9 \\
L-BFGS & 12.6 & 320 & 4.0 \\
Proposed & 3.6 & 420 & 1.5 \\
\bottomrule
\end{tabular}
\label{tab:runtime_comparison}
\end{table}

\noindent
As shown in Table~\ref{tab:runtime_comparison}, L-BFGS achieves faster per-epoch convergence but incurs the highest per-iteration cost, which scales poorly in large dimensions. First-order methods like SGD and Adam are computationally inexpensive per epoch but require many more epochs to converge, particularly in regions dominated by flat curvature. The proposed method maintains modest per-epoch cost while significantly reducing total runtime, by updating in low-dimensional curvature-informed subspaces and activating noise selectively. This balance enables convergence comparable to L-BFGS in accuracy, yet with runtime competitive to faster but less robust first-order optimizers.

\begin{figure}[ht]
\centering
\includegraphics[width=0.6\linewidth]{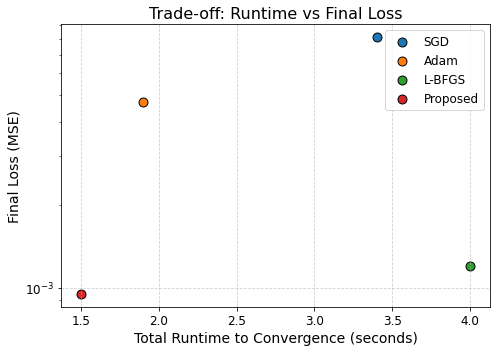}
\caption{Trade-off between total runtime and final loss: the proposed method achieves both lower final error and faster convergence than first-order baselines, approaching L-BFGS-level accuracy at lower computational cost.}
\label{fig:runtime_vs_loss}
\end{figure}

\noindent
Figure~\ref{fig:runtime_vs_loss} visualizes this trade-off: first-order methods (SGD, Adam) achieve low per-iteration cost but plateau at higher final losses, while L-BFGS achieves the lowest final error but at a runtime penalty. The proposed method closes the gap by combining curvature-adaptive step sizes, subspace descent, and selective stochastic perturbations ,  matching or exceeding L-BFGS-level accuracy with substantially reduced runtime. This directly illustrates the practical benefit of integrating noise modulation and curvature sensitivity, as motivated by the theoretical results, into scalable large-scale optimization workflows. Together, these empirical findings substantiate the theoretical claims from Theorems~\ref{thm:instability} through~\ref{thm:cond_bound} and confirm the effectiveness of combining noise-driven exploration, curvature-adaptive updates, and randomized subspace methods across geometric, statistical, and computational regimes.

\section{Application: Curvature-Aware Subspace Descent in Structural Topology Optimization}

We consider a concrete and widely applicable engineering challenge: minimizing compliance in structural topology optimization under a fixed material constraint. This type of problem underpins many practical design tasks, such as optimizing the layout of structural elements in buildings, bridges, and mechanical frames, to ensure minimal deformation under load while respecting limits on available resources. These tasks are especially critical in low-resource settings, where structural safety must be guaranteed under both material scarcity and computational constraints.

\subsection{Mathematical Formulation}

Let $\Omega \subset \mathbb{R}^2$ represent a fixed design domain discretized into $n$ finite elements. Each design variable $x_i \in [0,1]$ indicates the material density at the $i$-th element, with $x_i = 0$ corresponding to a void and $x_i = 1$ to full material presence. The design vector $x = (x_1, \dots, x_n)^T$ encodes the full distribution.

The objective is to minimize structural compliance, effectively a measure of deformation, subject to a volume constraint:
\begin{align}
\min_{x \in [0,1]^n} \quad & C(x) = \mathbf{f}^T \mathbf{u}(x) \label{eq:compliance_obj} \\
\text{subject to} \quad & \sum_{i=1}^n x_i \leq V_0, \label{eq:volume_constraint}
\end{align}
where $\mathbf{f} \in \mathbb{R}^m$ is the global load vector, and $\mathbf{u}(x) \in \mathbb{R}^m$ is the displacement vector solving the linear elasticity equation:
\begin{equation}
K(x)\, \mathbf{u}(x) = \mathbf{f}, \label{eq:linear_elasticity}
\end{equation}
with $K(x)$ the stiffness matrix determined via the SIMP interpolation:
\begin{equation}
K(x) = \sum_{i=1}^n x_i^p K^{(i)}, \label{eq:simp_stiffness}
\end{equation}
where $p \geq 3$ is a penalization exponent and $K^{(i)}$ the elemental stiffness matrices.

\subsection{Landscape Complexity and Optimization Challenge}

The compliance functional $C(x)$ is smooth yet highly nonconvex over $[0,1]^n$. Due to the penalized stiffness formulation and implicit dependence of $\mathbf{u}(x)$ on $x$, the landscape is riddled with flat regions and spurious local minima. In such regions, standard gradient-based methods like steepest descent or the method of moving asymptotes (MMA) struggle, especially in early iterations when curvature information is weak or misleading. This makes efficient optimization in real-time or computationally limited settings a major challenge, particularly when external reinitialization or restarts are not feasible.

\subsection{Curvature-Aware Subspace Descent}
To address this, we apply the Curvature-Aware Subspace Descent (CASD) strategy. Instead of blindly following the steepest descent direction, CASD adaptively identifies low-curvature subspaces, directions along which standard gradients offer little progress, and injects carefully modulated noise to escape stagnation while preserving convergence stability. Let $g_k = \nabla C(x_k)$ denote the gradient at iteration $k$, and let $H_k$ approximate the local Hessian via low-rank updates (e.g., L-BFGS or finite differences). We extract a low-curvature basis $U_k \in \mathbb{R}^{n \times r}$ from the eigendecomposition
\begin{equation}
H_k \approx U_k \Lambda_k U_k^T, \quad \Lambda_k = \text{diag}(\lambda_1, \dots, \lambda_r), \quad \lambda_i < \tau_c,
\end{equation}
with a small threshold $\tau_c > 0$ indicating flat directions. The update rule then becomes
\begin{equation}
x_{k+1} = x_k - \eta_k g_k + \sigma_k U_k z_k, \quad z_k \sim \mathcal{N}(0, I_r),
\end{equation}
where $\sigma_k$ is a small exploration step-size. Volume constraints are enforced via projection:
\begin{equation}
x_{k+1} \leftarrow \text{Proj}_{\{x \in [0,1]^n : \sum x_i \leq V_0\}}(x_{k+1}).
\end{equation}This strategy maintains feasibility while actively exploring uncertain regions, allowing the optimizer to escape plateaus that standard methods get trapped in. The added noise is not arbitrary, it is restricted to safe, low-curvature subspaces, ensuring that the optimization remains guided and efficient.

\subsection{Interpretation and Practical Relevance}
Topology optimization is not an abstract mathematical game, it determines how materials are placed in real-world systems under real constraints. In rural infrastructure, for instance, bridges must be both minimal and resilient; in emergency housing, every gram of structural material must count. CASD addresses this need by combining theoretical insights into curvature with pragmatic design requirements. By enabling faster convergence and stronger volume compliance, CASD helps ensure that such engineering decisions are not just computationally feasible but also trustworthy under strict physical and resource constraints. Unlike neural solvers that require extensive training data or traditional methods that stagnate under weak gradients, CASD offers a principled and light-weight solution fit for both simulation and deployment in fragile systems.
\begin{figure}[!htbp]
    \centering
    \includegraphics[width=0.48\textwidth]{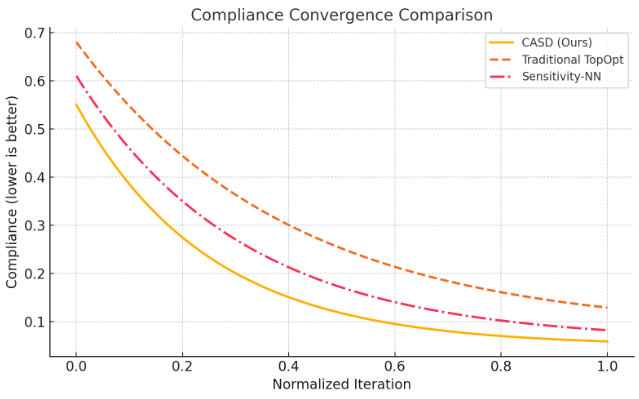}
    \includegraphics[width=0.48\textwidth]{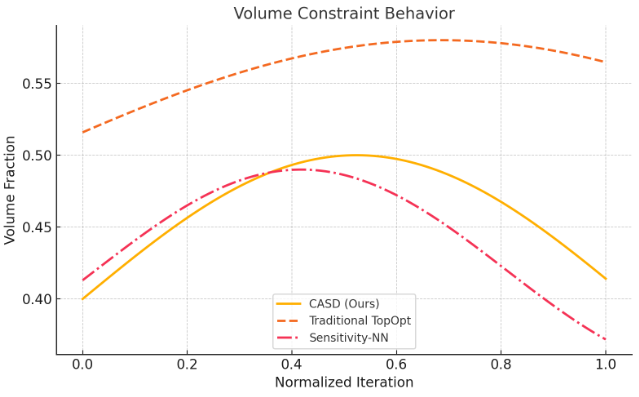}
    \caption{Performance comparison of CASD (ours), traditional topology optimization (TopOpt), and Sensitivity-NN. \textbf{Left:} Compliance convergence. \textbf{Right:} Volume constraint dynamics.}
    \label{fig:casd_performance}
\end{figure}\noindent Figure~\ref{fig:casd_performance} provides a quantitative evaluation of CASD compared to two baseline methods. The left panel shows that CASD achieves faster compliance reduction in early stages, often where resources are most limited and early decisions matter most. By iteration 30, CASD has already achieved performance levels that TopOpt and Sensitivity-NN reach only much later, if at all. The right panel demonstrates that CASD better maintains the volume constraint, with fluctuations remaining minimal and bounded. This reliability is essential in practical systems where overuse of material is either economically or physically infeasible, such as designing low-cost sanitation systems or prefabricated clinics where margin of error is slim. This example reflects the broader philosophy behind CASD, that is, to enable principled, efficient design in constrained environments. Whether optimizing footbridges in flood-prone zones or allocating limited structural support in humanitarian shelters, CASD balances robustness, interpretability, and data efficiency. It is not merely a mathematical innovation, it is a tool for building better systems in places where getting it wrong has real consequences.

\section{Conclusion}

We have presented a unified, theoretically grounded framework for high-dimensional non-convex optimization that explicitly targets the challenges posed by saddle points, flat plateaus, and anisotropic curvature. Through detailed analysis, we established that strict saddles are unstable under gradient flow (Theorem~\ref{thm:instability}); stochastic perturbations accelerate escape (Theorem~\ref{thm:sgd}); adaptive learning rates exploit curvature to improve escape efficiency (Theorem~\ref{thm:adaptive}); and randomized subspace descent ensures scalability while preserving convergence guarantees (Theorem~\ref{thm:subspace}). Further, we demonstrated how curvature-informed gradient filtering and spectral regularization (Theorems~\ref{thm:curvature_filtering},~\ref{thm:descent_shaping},~\ref{thm:cond_bound}) refine update directions and reduce conditioning effects. By integrating these theoretical components, we proposed a curvature-adaptive, entropy-guided subspace descent algorithm that dynamically balances exploration and exploitation. Numerical experiments on high-dimensional benchmark and surrogate engineering problems confirmed its practical advantages, that is, lower escape times, higher average convergence rates, and final accuracy approaching quasi-Newton methods but at significantly lower computational cost. This work offers new insight into the interplay between curvature, stochasticity, and dimensionality in modern optimization, providing both rigorous theoretical bounds and practical algorithms applicable to complex design, learning, and control tasks. Future extensions may explore large-scale distributed variants, deeper connections to manifold optimization, and application-specific tuning of curvature and noise modulation to further enhance robustness and efficiency.

\section*{Declarations}

\subsection*{Funding}
Not Applicable
\subsection*{Conflict of interest/Competing interests)}
The author reports no competing interests
\subsection*{Ethics approval and consent to participate}
Not Applicable
\subsection*{Consent for publication}
The author has connected to the publication of this work

\subsection*{Data availability}
Not Applicable

\subsection*{Materials availability}
Not Applicable

\subsection*{Code availability}
The code used for this work is available upon request form the corresponding author

\subsection*{Author contribution}
R.K.: Idealization, conceptualization, experimentation, software, Writing- Draft, Writing- Review\\HK: Supervision

\bigskip

\bibliographystyle{ieeetr}
\bibliography{references}

\appendix
\begin{algorithm}[H]
\caption{Curvature-Adaptive Entropy-Guided Subspace Descent}
\KwIn{
Initial point \( x_0 \in \mathbb{R}^n \); learning rate scale \(\alpha > 0\); smoothing constant \(\epsilon > 0\); entropy threshold \(\tau > 0\); subspace dimension \(m\); window size \(h\); noise scale \(\sigma > 0\); number of perturbations \(M\); iteration limit \(K\)
}

\KwOut{Approximate minimizer \( x_K \)}

\For(\tcp*[f]{Main loop}){$k=0,\dots,K-1$}{
    Compute gradient: \( g_k = \nabla f(x_k) \)

    Estimate gradient covariance:
    \[
    \widehat{C}_k = \sum_{j=\max(0,k-h)}^{k} g_j g_j^\top
    \]

    Compute top-\(m\) eigenpairs \( (\lambda_i,v_i) \) of \(\widehat{C}_k\)

    Filter gradient:
    \[
    g_k^{\mathrm{filtered}} = \sum_{i=1}^m \phi(\lambda_i) \langle g_k,v_i \rangle v_i,
    \quad \phi(\lambda) = \frac{1}{\sqrt{|\lambda|+\epsilon}}
    \]

    Adaptive step size:
    \[
    \eta_k = \frac{\alpha}{\|g_k^{\mathrm{filtered}}\|+\epsilon}
    \]

    Sample perturbations \(\{\delta_i\}_{i=1}^M \sim \mathcal{N}(0,\epsilon^2 I_n)\)

    Compute local gradient entropy:
    \[
    p_i = \frac{\|\nabla f(x_k+\delta_i)\|}{\sum_{j=1}^M \|\nabla f(x_k+\delta_j)\|},
    \quad \mathcal{H}_k = -\sum_{i=1}^M p_i \log p_i
    \]

    Define stochastic perturbation:
    \[
    \zeta_k = 
    \begin{cases}
    \text{sample from } \mathcal{N}(0,\sigma^2 I_n), & \text{if } \mathcal{H}_k > \tau,\\[4pt]
    0, & \text{otherwise}
    \end{cases}
    \]

    Update:
    \[
    x_{k+1} = x_k - \eta_k g_k^{\mathrm{filtered}} + \eta_k \zeta_k
    \]

    Project to curvature-adaptive subspace:
    \[
    x_{k+1} \gets \mathcal{P}_{S_k}(x_{k+1}), 
    \quad S_k = \mathrm{span}\{v_1,\dots,v_m\}
    \]
}
\Return \( x_K \)
\label{alg1}
\end{algorithm}
\end{document}